\newtheorem{thm}{Theorem}[section]
\newtheorem{cor}[thm]{Corollary}
\newtheorem{lem}[thm]{Lemma}
\newtheorem{defn}[thm]{Definition}
\numberwithin{equation}{section}
\begin{document}

\title{\bf  Anomaly cancellation formulas and $E_8$ bundles}
\author{Yong Wang, Yuchen Yang}

\thanks{{\scriptsize
\hskip -0.4 true cm \textit{2010 Mathematics Subject Classification:}
58C20; 57R20; 53C80.
\newline \textit{Key words and phrases:} Modular forms; anomaly cancellation formulas; $E_8$ bundles }}

\maketitle

\begin{abstract}
 Using $E_8$ bundles, we construct some modular forms over $SL(2,{\bf Z})$, $\Gamma^0(2)$ and $\Gamma_0(2)$. By these modular forms, we get
 some new anomaly cancellation formulas of characteristic forms.
\end{abstract}

\vskip 0.2 true cm


\pagestyle{myheadings}
\markboth{\rightline {\scriptsize Yong Wang and Yuchen Yang}}
         {\leftline{\scriptsize Anomaly cancellation formulas and $E_8$ bundles}}

\bigskip
\bigskip


\section{ Introduction}
\quad In 1983, the physicists Alvarez-Gaum\'{e} and Witten \cite{AW}
  discovered the "miraculous cancellation" formula for gravitational
  anomaly which reveals a beautiful relation between the top
  components of the Hirzebruch $\widehat{L}$-form and
  $\widehat{A}$-form of a $12$-dimensional smooth Riemannian
  manifold. Kefeng Liu \cite{Li1} established higher dimensional "miraculous cancellation"
  formulas for $(8k+4)$-dimensional Riemannian manifolds by
  developing modular invariance properties of characteristic forms.
  These formulas could be used to deduce some divisibility results. In
  \cite{HZ1}, \cite{HZ2}, \cite{CH}, some more general cancellation formulas that involve a
  complex line bundle and their applications were established. In \cite{HLZ1}, using the Eisenstein series, a more general cancellation
  formula was derived.  In \cite{HLZ2}, Han, Liu and Zhang showed that both of the Green-Schwarz anomaly factorization formula
for the gauge group $E_8\times E_8$ and the Horava-Witten anomaly factorization formula for the gauge
group $E_8$ could be derived through modular forms of weight $14$. This answered a question of J.
H. Schwarz. They also established generalizations of these factorization formulas and obtaind a new
Horava-Witten type factorization formula on $12$-dimensional manifolds. In \cite{HHLZ}, Han, Huang, Liu and Zhang introduced a modular form of weight $14$ over $SL(2,{\bf Z})$ and a modular form of weight $10$ over $SL(2,{\bf Z})$ by $E_8$ bundles and they got some interesting
anomaly cancellation formulas on $12$-dimensional manifolds.
  In \cite{CHZ}, Chen, Han and Zhang defined an integral modular form of weight $2k$ for a $4k+2$-dimensional $spin^c$ manifold.
 In this paper, we twist the Chen-Han-Zhang $SL(2,{\bf Z})$ modular form by $E_8$ bundles and get $SL(2,{\bf Z})$ modular forms of weight $14$ and $10$ for $14$ and $10$-dimensional $spin^c$ manifolds. In \cite{Li1}, Liu introduced some $\Gamma^0(2)$ and $\Gamma_0(2)$ modular forms and got
 some interesting anomaly cancellation formulas. In this paper, we also twist the Liu's modular form by $E_8$ bundles and get $\Gamma^0(2)$ and $\Gamma_0(2)$ modular forms of weight $14$ and $10$ for a $12$-dimensional spin manifold. By these modular forms, we get
 some new anomaly cancellation formulas of characteristic forms.\\
  \indent This paper is organized as follows: In Section 2, we twist the Chen-Han-Zhang $SL(2,{\bf Z})$ modular form by $E_8$ bundles and get $SL(2,{\bf Z})$ modular forms of weight $14$ and $10$ for $14$ and $10$-dimensional $spin^c$ manifolds and we get
 some anomaly cancellation formulas of characteristic forms.
In Section 3, we twist the Liu's modular form by $E_8$ bundles and get $\Gamma^0(2)$ and $\Gamma_0(2)$ modular forms of weight $14$ and $10$ for a $12$-dimensional spin manifold. By these modular forms, we get
 some new anomaly cancellation formulas of characteristic forms.

  \vskip 1 true cm

\section{Anomaly cancellation formulas for $14$ and $10$-dimensional $spin^c$ manifolds}

\indent For the basic representation theory for the affine $E_8$, we can see Section 1 in \cite{HLZ2}. Here we omit it. For the principle $E_8$ bundle $P_i$, $i=1,2$, consider the associated bundles $\mathcal{V}_i=\sum_{k=0}^{\infty}(P_i\times_{\rho_k}V_k)q^k\in K(M)[[q]].$ We let $W_i=P_i\times_{\rho_1}V_1$. Let $M$ be a $14$-dimensional spinc manifold and $L$ be the complex line bundle associated to the given spinc structure on $M$. We also
consider $L$ as a real vector bundle denoted by $L_R$. \\
\indent Denote by $c=c_1(L)=2\pi\sqrt{-1}u$ the first Chern class of $L$. Let
$\varphi(\tau)=\prod_{n=1}^{\infty}(1-q^n)$, where $q=e^{2\pi\sqrt{-1}\tau}$ and $\tau\in H$ the upper half plane.
We recall the four Jacobi theta functions are
   defined as follows( cf. \cite{Ch}):
 \begin{equation}  \theta(v,\tau)=2q^{\frac{1}{8}}{\rm sin}(\pi
   v)\prod_{j=1}^{\infty}[(1-q^j)(1-e^{2\pi\sqrt{-1}v}q^j)(1-e^{-2\pi\sqrt{-1}v}q^j)],
   \end{equation}
\begin{equation}\theta_1(v,\tau)=2q^{\frac{1}{8}}{\rm cos}(\pi
   v)\prod_{j=1}^{\infty}[(1-q^j)(1+e^{2\pi\sqrt{-1}v}q^j)(1+e^{-2\pi\sqrt{-1}v}q^j)],\end{equation}
\begin{equation}\theta_2(v,\tau)=\prod_{j=1}^{\infty}[(1-q^j)(1-e^{2\pi\sqrt{-1}v}q^{j-\frac{1}{2}})
(1-e^{-2\pi\sqrt{-1}v}q^{j-\frac{1}{2}})],\end{equation}
\begin{equation}\theta_3(v,\tau)=\prod_{j=1}^{\infty}[(1-q^j)(1+e^{2\pi\sqrt{-1}v}q^{j-\frac{1}{2}})
(1+e^{-2\pi\sqrt{-1}v}q^{j-\frac{1}{2}})],\end{equation}
By (2.3)-(2.7) in \cite{HLZ2}, we have that there are formal two forms $y_l^i,1\leq l\leq 8,i=1,2$ such that
\begin{equation}
\varphi(\tau)^8{\rm ch}(\mathcal{V}_i)=\frac{1}{2}\left(\prod_{l=1}^8\theta_1(y_l^i,\tau)+\prod_{l=1}^8\theta_2(y_l^i,\tau)+\prod_{l=1}^8\theta_3(y_l^i,\tau)\right),
\end{equation}
and
\begin{equation}
\sum_{l=1}^8(2\pi\sqrt{-1}y_l^i)^2=-\frac{1}{30}c_2(W_i),
\end{equation}
where $c_2(W_i)$ denotes the second Chern class of $W_i$. Let $E_2(\tau)$ is the Eisenstein series satisfying:
\begin{equation}
E_2(\tau+1)=E_2(\tau),~~E_2(-\frac{1}{\tau})=\tau^2E_2(\tau)-\frac{6\sqrt{-1}\tau}{\pi},
\end{equation}
and
\begin{equation}
E_2(\tau)=1-24q-72q^2+O(q^3).
\end{equation}
Let
$A:=p_1(M)-p_1(L_R)+\frac{1}{30}(c_2(W_i)+c_2(W_j)),$ where $p_1(M),p_1(L_R)$ denote the first Pontryajin classes of $M$ and $L_R$.
Let \begin{align}
Q(M,P_i,P_j,\tau)=&\left\{e^{\frac{1}{24}E_2(\tau)A}\widehat{A}(TX){\rm exp}(\frac{c}{2}){\rm ch}\left[\bigotimes _{n=1}^{\infty}S_{q^n}(\widetilde{T_CM})
\otimes
\bigotimes _{m=1}^{\infty}\wedge_{-q^m}(\widetilde{L_C})\right]\right.\\\notag
&\left.\cdot\varphi(\tau)^{16}{\rm ch}(\mathcal{V}_i){\rm ch}(\mathcal{V}_j)\right\}^{(14)}.
\end{align}
Then
\begin{align}&Q(M,P_i,P_j,\tau)=\left\{e^{\frac{1}{24}E_2(\tau)A}\left(\prod_{j=1}^{7}\frac{x_j\theta'(0,\tau)}{\theta(x_j,\tau)}\right)
\frac{\sqrt{-1}\theta(u,\tau)}{\theta_1(0,\tau)\theta_2(0,\tau)
\theta_3(0,\tau)}\right.\\\notag
&\left.\frac{1}{4}\left(\prod_{l=1}^8\theta_1(y_l^i,\tau)+\prod_{l=1}^8\theta_2(y_l^i,\tau)+\prod_{l=1}^8\theta_3(y_l^i,\tau)\right)
\left(\prod_{l=1}^8\theta_1(y_l^j,\tau)+\prod_{l=1}^8\theta_2(y_l^j,\tau)+\prod_{l=1}^8\theta_3(y_l^j,\tau)\right)
\right\}^{(14)},
\end{align}
where $\pm 2\pi\sqrt{-1}x_j,1\leq j\leq 7$ denote the Chern roots of $T_CM$.
 One
has the following transformation laws of theta functions (cf. \cite{Ch} ):
\begin{equation}\theta(v,\tau+1)=e^{\frac{\pi\sqrt{-1}}{4}}\theta(v,\tau),~~\theta(v,-\frac{1}{\tau})
=\frac{1}{\sqrt{-1}}\left(\frac{\tau}{\sqrt{-1}}\right)^{\frac{1}{2}}e^{\pi\sqrt{-1}\tau
v^2}\theta(\tau v,\tau);\end{equation}
\begin{equation}\theta_1(v,\tau+1)=e^{\frac{\pi\sqrt{-1}}{4}}\theta_1(v,\tau),~~\theta_1(v,-\frac{1}{\tau})
=\left(\frac{\tau}{\sqrt{-1}}\right)^{\frac{1}{2}}e^{\pi\sqrt{-1}\tau
v^2}\theta_2(\tau v,\tau);\end{equation}
\begin{equation}\theta_2(v,\tau+1)=\theta_3(v,\tau),~~\theta_2(v,-\frac{1}{\tau})
=\left(\frac{\tau}{\sqrt{-1}}\right)^{\frac{1}{2}}e^{\pi\sqrt{-1}\tau
v^2}\theta_1(\tau v,\tau);\end{equation}
\begin{equation}\theta_3(v,\tau+1)=\theta_2(v,\tau),~~\theta_3(v,-\frac{1}{\tau})
=\left(\frac{\tau}{\sqrt{-1}}\right)^{\frac{1}{2}}e^{\pi\sqrt{-1}\tau
v^2}\theta_3(\tau v,\tau),\end{equation}
 \begin{equation}\theta'(v,\tau+1)=e^{\frac{\pi\sqrt{-1}}{4}}\theta'(v,\tau),~~
 \theta'(0,-\frac{1}{\tau})=\frac{1}{\sqrt{-1}}\left(\frac{\tau}{\sqrt{-1}}\right)^{\frac{1}{2}}
\tau\theta'(0,\tau).\end{equation}
\begin{defn} A modular form over $\Gamma$, a
 subgroup of $SL_2({\bf Z})$, is a holomorphic function $f(\tau)$ on
 $\textbf{H}$ such that
\begin{equation} f(g\tau):=f\left(\frac{a\tau+b}{c\tau+d}\right)=\chi(g)(c\tau+d)^kf(\tau),
 ~~\forall g=\left(\begin{array}{cc}
\ a & b  \\
 c & d
\end{array}\right)\in\Gamma,\end{equation}
\noindent where $\chi:\Gamma\rightarrow {\bf C}^{\star}$ is a
character of $\Gamma$. $k$ is called the weight of $f$.
\end{defn}
By (2.7) and (2.11)-(2.15), we have
\begin{lem}
$Q(M,P_i,P_j,\tau)$ is a modular form over $SL_2({\bf Z})$ with the weight $14$.
\end{lem}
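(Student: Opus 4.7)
The plan is to verify the modular transformation law for the two generators $T:\tau\mapsto\tau+1$ and $S:\tau\mapsto-1/\tau$ of $SL_2({\bf Z})$. Holomorphicity on ${\bf H}$ is automatic, since $E_2(\tau)$, the four Jacobi theta functions, and the non-vanishing quantities $\theta'(0,\tau)$ and $\theta_k(0,\tau)$ are all holomorphic in $\tau$ on ${\bf H}$, and the formal Chern roots yield only finitely many surviving terms after taking the degree-$14$ component.

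For $T$-invariance I would apply (2.11)--(2.15) and (2.7) termwise. The prefactor $e^{\frac{1}{24}E_2(\tau)A}$ is unchanged because $E_2(\tau+1)=E_2(\tau)$. Each ratio $\theta'(0,\tau)/\theta(x_j,\tau)$ is $T$-invariant since $\theta$ and $\theta'$ both acquire the same root of unity $e^{\pi\sqrt{-1}/4}$. The ratio $\sqrt{-1}\theta(u,\tau)/[\theta_1(0,\tau)\theta_2(0,\tau)\theta_3(0,\tau)]$ is $T$-invariant as well: $\theta(u,\tau)$ and $\theta_1(0,\tau)$ each contribute a canceling $e^{\pi\sqrt{-1}/4}$, while the swap $\theta_2(0,\tau)\leftrightarrow\theta_3(0,\tau)$ preserves the product. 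For the $E_8$ factors, $\prod_{l=1}^8\theta_1(y_l^i,\tau+1)=(e^{\pi\sqrt{-1}/4})^8\prod_l\theta_1(y_l^i,\tau)=\prod_l\theta_1(y_l^i,\tau)$, and $\prod_l\theta_2\leftrightarrow\prod_l\theta_3$, so the sum $\sum_k\prod_l\theta_k(y_l^i,\tau)$ is invariant, and likewise for the $j$-factor.

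For $S$-invariance the bookkeeping is the main work. Write $F(z,\tau)$ for the product of theta-function factors inside the braces of (2.10), with formal Chern roots $z\in\{x_j,u,y_l^i,y_l^j\}$. Applying (2.12)--(2.15) with $\tau\mapsto-1/\tau$, every theta at argument $v$ transforms as $(\tau/\sqrt{-1})^{1/2}e^{\pi\sqrt{-1}\tau v^2}$ times a theta at rescaled argument $\tau v$, with a permutation of $\theta_1,\theta_2,\theta_3$ that is harmless in $\theta_1(0,\tau)\theta_2(0,\tau)\theta_3(0,\tau)$ and in the sums $\sum_k\prod_l\theta_k(\cdot,\tau)$. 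Collecting the net half-powers of $(\tau/\sqrt{-1})$ from seven $\theta'(0,\tau)$'s, seven $\theta(x_j,\tau)$'s in the denominator, one $\theta(u,\tau)$, three $\theta_k(0,\tau)$'s in the denominator, and sixteen $\theta_k(y_l^{i/j},\tau)$'s gives $7-7+1-3+16=14$ half-powers; the seven explicit $\tau$'s from $\theta'(0,-1/\tau)^7$ combine with the $x_j$'s to produce rescaled arguments $\tau x_j$; and the $\sqrt{-1}$ prefactors of $\theta$ and $\theta'$ cancel against the $\sqrt{-1}$ already present in (2.10). The net outcome is
\begin{equation*}
F(z,-1/\tau)=\tau^7\, e^{\pi\sqrt{-1}\tau\bigl(-\sum_j x_j^2+u^2+\sum_l (y_l^i)^2+\sum_l (y_l^j)^2\bigr)}\,F(\tau z,\tau),
\end{equation*}
where $F(\tau z,\tau)$ denotes the same expression with every $z$ replaced by $\tau z$. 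Using the Chern-root identities $-4\pi^2\sum_j x_j^2=p_1(M)$ and $-4\pi^2 u^2=c_1(L)^2=p_1(L_R)$, together with (2.6) for $W_i$ and $W_j$, the exponent collapses to $\sqrt{-1}\tau A/(4\pi)$.

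Combining with $e^{\frac{1}{24}E_2(-1/\tau)A}=e^{\tau^2 E_2(\tau)A/24}\cdot e^{-\sqrt{-1}\tau A/(4\pi)}$ from (2.7), the anomalous exponentials cancel exactly, yielding $e^{\frac{1}{24}E_2(-1/\tau)A}F(z,-1/\tau)=\tau^7\,e^{\tau^2 E_2(\tau)A/24}F(\tau z,\tau)$. Since $A$ is a degree-$2$ expression in the formal $2$-form variables, $\tau^2 A$ equals $A$ evaluated at $\tau z$, so the right-hand side is $\tau^7$ times the full integrand at $\tau z$. Taking the degree-$14$ cohomology component, which corresponds to degree $7$ in the formal $2$-form variables, the rescaling $z\mapsto\tau z$ produces another factor $\tau^7$, and one concludes $Q(M,P_i,P_j,-1/\tau)=\tau^{14}Q(M,P_i,P_j,\tau)$. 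The hardest part is the meticulous accounting of the half-integer powers of $\tau/\sqrt{-1}$ and the $\sqrt{-1}$ factors, and the check that the $(\theta_1,\theta_2,\theta_3)$-permutation preserves the relevant products and sums; everything else is a direct substitution.
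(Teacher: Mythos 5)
Your proof is correct and follows exactly the route the paper intends: the paper's "proof" of Lemma 2.2 is just the citation of (2.7) and (2.11)--(2.15), and your careful bookkeeping of the $T$- and $S$-transformations (including the cancellation of the anomalous $e^{\sqrt{-1}\tau A/(4\pi)}$ factor against the $E_2$ prefactor and the $\tau^7+\tau^7=\tau^{14}$ count) is precisely the standard Liu-type argument being invoked.
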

By
\begin{equation}
{\rm ch}(\mathcal{V}_i)=1+(248-c_2(W_i)+\cdots)q+\cdots,
\end{equation}
and (2.8), we have by expanding the $q$-series:
\begin{align}
&e^{\frac{1}{24}E_2(\tau)A}\widehat{A}(TX){\rm exp}(\frac{c}{2}){\rm ch}\left[\bigotimes _{n=1}^{\infty}S_{q^n}(\widetilde{T_CM})
\otimes
\bigotimes _{m=1}^{\infty}\wedge_{-q^m}(\widetilde{L_C})\right]\varphi(\tau)^{16}{\rm ch}(\mathcal{V}_i){\rm ch}(\mathcal{V}_j)\\\notag
&=(e^{\frac{1}{24}A}-e^{\frac{1}{24}A}Aq+O(q^2))\widehat{A}(TX){\rm exp}(\frac{c}{2})
{\rm ch}\left[(1+q\widetilde{T_CM}+O(q^2))(1-q\widetilde{L_C}+O(q^2))\right]\\\notag
&\cdot(1-16q+O(q^2))(1+{\rm ch}({W_i})q+O(q^2))(1+{\rm ch}({W_j})q+O(q^2))\\\notag
&=e^{\frac{1}{24}A}\widehat{A}(TX){\rm exp}(\frac{c}{2})+\left[e^{\frac{1}{24}A}\widehat{A}(TX){\rm exp}(\frac{c}{2})
{\rm ch}(\widetilde{T_CM}-\widetilde{L_C}-16+W_i+W_j)\right.\\\notag
&\left.-e^{\frac{1}{24}A}A\widehat{A}(TX){\rm exp}(\frac{c}{2})\right]q+
O(q^2).\notag
\end{align}
It is well known that modular forms over $SL_2({\bf Z})$ can be expressed as polynomials of the Einsentein series $E_4(\tau)$ and $E_6(\tau)$,
where
 \begin{equation}
E_4(\tau)=1+240q+2160q^2+6720q^3+\cdots,
\end{equation}
\begin{equation}
E_6(\tau)=1-504q-16632q^2-122976q^3+\cdots.
\end{equation}
Their weights are $4$ and $6$ respectively. Since $Q(M,P_i,P_j,\tau)$ is a modular form over $SL_2({\bf Z})$ with the weight $14$, it must be
a multiple of
\begin{equation}
E_4(\tau)^2E_6(\tau)=1-24q-196632q^2+\cdots.
\end{equation}
So
\begin{align}
&\left\{e^{\frac{1}{24}A}\widehat{A}(TX){\rm exp}(\frac{c}{2})
{\rm ch}(\widetilde{T_CM}-\widetilde{L_C}-16+W_i+W_j)-e^{\frac{1}{24}A}A\widehat{A}(TX){\rm exp}(\frac{c}{2})\right\}^{(14)}\\\notag
&=-24\left\{e^{\frac{1}{24}A}\widehat{A}(TX){\rm exp}(\frac{c}{2})\right\}^{(14)}. \notag
\end{align}
By (2.22), we have
\begin{thm} One has the following equality:
\begin{align}
&\left\{\widehat{A}(TX){\rm exp}(\frac{c}{2})
{\rm ch}(\widetilde{T_CM}-\widetilde{L_C}+8+W_i+W_j)\right\}^{(14)}\\\notag
&=A\left\{e^{\frac{1}{24}A}\widehat{A}(TX){\rm exp}(\frac{c}{2})-\frac{e^{\frac{1}{24}A}-1}{A}
\widehat{A}(TX){\rm exp}(\frac{c}{2})
{\rm ch}(\widetilde{T_CM}-\widetilde{L_C}+8+W_i+W_j)\right\}^{(10)}
. \notag
\end{align}
\end{thm}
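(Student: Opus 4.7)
The plan is to derive the theorem as a direct algebraic consequence of equation (2.22), which is where the substantive modular-forms input resides. The remainder is essentially bookkeeping: rewrite $-16$ as $8-24$ on the left-hand side of (2.22), absorb one $-24$ into the constant term of the twisted Chern character, cancel it against the $-24\{e^{A/24}\widehat{A}(TX)\exp(c/2)\}^{(14)}$ on the right-hand side, and then pull the degree-$4$ form $A$ out of a $14$-form to recover a $10$-form.

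To begin, let me write $F=\widehat{A}(TX)\exp(c/2)$ and $G={\rm ch}(\widetilde{T_CM}-\widetilde{L_C}+8+W_i+W_j)$ for brevity. Since adding $-24$ to a virtual bundle shifts its Chern character by $-24$, the factor ${\rm ch}(\widetilde{T_CM}-\widetilde{L_C}-16+W_i+W_j)$ appearing in (2.22) equals $G-24$. Substituting this into (2.22) gives
\begin{equation*}
\{e^{\frac{1}{24}A}F\cdot G\}^{(14)}-24\{e^{\frac{1}{24}A}F\}^{(14)}-\{e^{\frac{1}{24}A}A\,F\}^{(14)}=-24\{e^{\frac{1}{24}A}F\}^{(14)},
\end{equation*}
and the two $-24\{e^{A/24}F\}^{(14)}$ terms cancel to leave the clean identity
\begin{equation*}
\{e^{\frac{1}{24}A}FG\}^{(14)}=\{e^{\frac{1}{24}A}A\,F\}^{(14)}.
\end{equation*}

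Next I split $e^{\frac{1}{24}A}=1+(e^{\frac{1}{24}A}-1)$ on the left-hand side. Because $A$ is a degree-$4$ form and $(e^{\frac{1}{24}A}-1)/A=\tfrac{1}{24}+\tfrac{1}{2!}\tfrac{A}{24^{2}}+\cdots$ is a polynomial in $A$ with no constant term divided by $A$ (and thus itself a well-defined inhomogeneous form), I may write $e^{\frac{1}{24}A}-1=A\cdot\tfrac{e^{\frac{1}{24}A}-1}{A}$. Substituting and rearranging yields
\begin{equation*}
\{FG\}^{(14)}=\{e^{\frac{1}{24}A}A\,F\}^{(14)}-\Bigl\{A\cdot\tfrac{e^{\frac{1}{24}A}-1}{A}\cdot FG\Bigr\}^{(14)}.
\end{equation*}

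Finally I use that multiplication by the degree-$4$ form $A$ raises the degree by $4$, so for any inhomogeneous form $X$ one has $\{A\cdot X\}^{(14)}=A\{X\}^{(10)}$. Applying this to both terms on the right and factoring out $A$ gives
\begin{equation*}
\{FG\}^{(14)}=A\Bigl\{e^{\frac{1}{24}A}F-\tfrac{e^{\frac{1}{24}A}-1}{A}FG\Bigr\}^{(10)},
\end{equation*}
which is precisely the claimed identity. There is no real obstacle here, since the analytic content (the modular-form argument that forces the coefficient $-24$ to match $E_{4}^{2}E_{6}=1-24q+\cdots$) has already been carried out; the only thing to watch is the correct tracking of constants, specifically the shift $-16\mapsto 8$ and the fact that $\tfrac{e^{A/24}-1}{A}$ is genuinely a form, which is guaranteed because the power series $\tfrac{e^{x}-1}{x}$ has no pole at the origin.
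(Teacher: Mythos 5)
Your proof is correct and follows exactly the route the paper intends: the paper's entire justification for Theorem 2.3 is the phrase ``By (2.22)'', and your computation (rewriting $-16$ as $8-24$, cancelling the $-24\{e^{A/24}\widehat{A}(TX)\exp(c/2)\}^{(14)}$ terms, splitting $e^{A/24}=1+A\cdot\frac{e^{A/24}-1}{A}$, and extracting the degree-$4$ form $A$ to drop from the $14$-component to the $10$-component) is precisely the omitted bookkeeping. No gaps.
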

\begin{cor}
When $A=0$, we have
\begin{align}
&\left\{\widehat{A}(TX){\rm exp}(\frac{c}{2})
{\rm ch}(\widetilde{T_CM}-\widetilde{L_C}-16+W_i+W_j)\right\}^{(14)}\\\notag
&=-24\left\{\widehat{A}(TX){\rm exp}(\frac{c}{2})\right\}^{(14)}
. \notag
\end{align}
When $M$ is a $14$-dimensional $spin^c$ manifold, then ${\rm Ind}D^c\otimes(\widetilde{T_CM}-\widetilde{L_C}-16+W_i+W_j)_+$ is a multiply of $24$.
\end{cor}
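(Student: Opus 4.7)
The plan is to derive both halves of the corollary as direct consequences of equation $(2.22)$ combined with the Atiyah--Singer index theorem for the spin$^c$ Dirac operator. The first equality is essentially a specialization: when $A=0$, the exponential factor $e^{\frac{1}{24}A}$ collapses to $1$ on both sides of $(2.22)$, and simultaneously the second summand on the left-hand side of $(2.22)$ vanishes because of its explicit $A$-factor. What survives is precisely the asserted identity between the degree-$14$ components. So the first claim requires only bookkeeping on $(2.22)$, not any new input.

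For the divisibility statement the plan is to pair the first identity with the fundamental class $[M]$ and invoke the index formula for the chirality-positive spin$^c$ Dirac operator twisted by a virtual bundle $E$, namely
\begin{equation*}
\mathrm{Ind}(D^c\otimes E_+) \;=\; \left\langle \widehat{A}(TM)\exp(c/2)\,\mathrm{ch}(E),\,[M]\right\rangle.
\end{equation*}
Applied with $E=\widetilde{T_CM}-\widetilde{L_C}-16+W_i+W_j$ on the left side of the first identity and with $E=\mathbf{1}$ on the right, this yields
\begin{equation*}
\mathrm{Ind}\bigl(D^c\otimes(\widetilde{T_CM}-\widetilde{L_C}-16+W_i+W_j)_+\bigr) \;=\; -24\,\mathrm{Ind}(D^c).
\end{equation*}
Since $\mathrm{Ind}(D^c)$ is an integer, the left-hand side is a multiple of $24$.

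Because $(2.22)$ has already been derived and Atiyah--Singer is a standard tool, there is no genuinely difficult step; what passes for the main obstacle is simply confirming that $\widetilde{T_CM}-\widetilde{L_C}-16+W_i+W_j$ is a well-defined virtual bundle in $K(M)$ so that the twisted index makes sense. With the conventions $\widetilde{T_CM}=T_CM-14$, $\widetilde{L_C}=L_C-2$, and $W_i,W_j$ of rank $248$, this is automatic, and at any rate both sides of $(2.22)$ are already equated as Chern characters, so the pairing with $[M]$ is unambiguous.
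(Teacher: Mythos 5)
Your proposal is correct and follows the paper's own route: the first identity is just equation $(2.22)$ (equivalently Theorem 2.3) with $A=0$, so that $e^{\frac{1}{24}A}=1$ and the $A$-multiplied term drops, and the divisibility then follows by pairing with $[M]$ and applying the Atiyah--Singer index theorem for the twisted spin$^c$ Dirac operator, which is exactly the (unstated) argument the paper intends. No gaps.
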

Let
$A_1:=p_1(M)-p_1(L_R)+\frac{1}{30}c_2(W_i).$
Let \begin{align}
Q(M,P_i,\tau)=&\left\{e^{\frac{1}{24}E_2(\tau)A_1}\widehat{A}(TX){\rm exp}(\frac{c}{2}){\rm ch}\left[\bigotimes _{n=1}^{\infty}S_{q^n}(\widetilde{T_CM})
\otimes
\bigotimes _{m=1}^{\infty}\wedge_{-q^m}(\widetilde{L_C})\right]\right.\\\notag
&\left.\cdot\varphi(\tau)^{8}{\rm ch}(\mathcal{V}_i)\right\}^{(14)}.
\end{align}
Then
\begin{align}&Q(M,P_i,\tau)=\left\{e^{\frac{1}{24}E_2(\tau)A_1}\left(\prod_{j=1}^{7}\frac{x_j\theta'(0,\tau)}{\theta(x_j,\tau)}\right)
\frac{\sqrt{-1}\theta(u,\tau)}{\theta_1(0,\tau)\theta_2(0,\tau)
\theta_3(0,\tau)}\right.\\\notag
&\left.\frac{1}{2}\left(\prod_{l=1}^8\theta_1(y_l^i,\tau)+\prod_{l=1}^8\theta_2(y_l^i,\tau)+\prod_{l=1}^8\theta_3(y_l^i,\tau)\right)
\right\}^{(14)}.
\end{align}
Similar to Lemma 2.2, we have
\begin{lem}
$Q(M,P_i,\tau)$ is a modular form over $SL_2({\bf Z})$ with the weight $10$.
\end{lem}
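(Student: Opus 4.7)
The plan is to parallel the proof of Lemma 2.2, checking the modular transformation of $Q(M,P_i,\tau)$ on the two generators $T:\tau\mapsto\tau+1$ and $S:\tau\mapsto-1/\tau$ of $SL_2({\bf Z})$. The theta representation (2.25), together with the transformation formulas (2.11)--(2.15) and the quasi-modular law (2.7), provides all the ingredients; what is new compared with Lemma 2.2 is that there is only a single $E_8$-factor, which is the source of the drop of weight from $14$ to $10$.

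For $T$-invariance I would check that the phases $e^{\pi\sqrt{-1}/4}$ acquired by $\theta'(0,\tau)$, $\theta(u,\tau)$, $\theta(x_j,\tau)$ and $\theta_1(0,\tau)$ cancel in the numerator-denominator ratios of (2.25), and that the involution $\theta_2(0,\tau)\leftrightarrow\theta_3(0,\tau)$ fixes their product. The eight-fold product $\prod_{l=1}^{8}\theta_1(y_l^i,\tau)$ acquires $e^{8\pi\sqrt{-1}/4}=1$, while the other two eight-fold products are swapped, so the $E_8$-sum is preserved. Combined with $E_2(\tau+1)=E_2(\tau)$, this yields $Q(M,P_i,\tau+1)=Q(M,P_i,\tau)$.

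The real work is the $S$-transformation. Substituting (2.11)--(2.15) into (2.25) produces, besides the rescaling $v\mapsto\tau v$ in every theta argument, quadratic exponentials $e^{\pi\sqrt{-1}\tau v^2}$ and automorphy factors $(\tau/\sqrt{-1})^{1/2}$. The net count of the square-root factors is $(\tau/\sqrt{-1})^{-1}$ from the $\theta(u,\tau)/[\theta_1\theta_2\theta_3](0,\tau)$ piece combined with $(\tau/\sqrt{-1})^{4}$ from the $E_8$ eight-fold products (the $\prod_{j}\theta'(0,\tau)/\theta(x_j,\tau)$ contribution being neutral in the $(\tau/\sqrt{-1})^{1/2}$-count), giving $\tau^{3}$ after using $\sqrt{-1}^{4}=1$; the $1/\sqrt{-1}$ factors carried by $\theta$ and $\theta'$ pair off cleanly, and the stray one is absorbed by the explicit $\sqrt{-1}$ in (2.25). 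Using the characteristic-class identifications $\sum_{j=1}^{7}(2\pi\sqrt{-1}x_j)^{2}=p_1(M)$, $(2\pi\sqrt{-1}u)^{2}=p_1(L_R)$ and (2.6), the accumulated quadratic exponentials combine into $e^{\sqrt{-1}\tau A_1/(4\pi)}$, which is cancelled precisely by the non-homogeneous term $e^{-\sqrt{-1}\tau A_1/(4\pi)}$ arising from $\frac{1}{24}E_2(-1/\tau)A_1=\frac{1}{24}\tau^{2}E_2(\tau)A_1-\sqrt{-1}\tau A_1/(4\pi)$, leaving the leading $\tau^{2}$-piece to match the rescaling of the original $E_2$-prefactor, since $A_1$ is a $4$-form and hence picks up $\tau^{2}$ under $v\mapsto\tau v$.

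At this stage I obtain $\Psi(-1/\tau;x,u,y)=\tau^{3}\Psi(\tau;\tau x,\tau u,\tau y)$, where $\Psi$ denotes the integrand of (2.25). The degree-$14$ part of $\Psi(\tau;\tau x,\tau u,\tau y)$ carries an additional $\tau^{7}$, since it is a polynomial of total degree seven in the underlying $2$-forms. Hence $Q(M,P_i,-1/\tau)=\tau^{10}Q(M,P_i,\tau)$, which is the desired weight. The main bookkeeping obstacle is keeping all the $(\tau/\sqrt{-1})^{1/2}$ and $1/\sqrt{-1}$ contributions from (2.11)--(2.15) straight so that they collapse into a clean $\tau^{3}$ with no residual fourth root of unity; the delicate conceptual point is the exact cancellation of the $A_1$-exponential generated by the theta rescaling against the non-homogeneous part $-6\sqrt{-1}\tau/\pi$ of $E_2(-1/\tau)$, which is what forces the coefficient $1/24$ in the prefactor and the specific combination defining $A_1$.
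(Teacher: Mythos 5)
Your proposal is correct and follows exactly the route the paper intends: the paper's proof of this lemma is simply ``Similar to Lemma 2.2,'' which in turn rests on (2.7) and the theta transformation laws (2.11)--(2.15) applied to the theta-function expression (2.25), and your detailed bookkeeping (the $\tau^{-1}$ from the $spin^c$ factor, $\tau^{4}$ from the single $E_8$ product, the cancellation of $e^{\sqrt{-1}\tau A_1/(4\pi)}$ against the anomalous term of $E_2$, and the extra $\tau^{7}$ from the degree-$14$ component) correctly produces the weight $10$. This is the same argument, merely written out in full.
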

Similar to (2.18), we have
\begin{align}
&e^{\frac{1}{24}E_2(\tau)A_1}\widehat{A}(TX){\rm exp}(\frac{c}{2}){\rm ch}\left[\bigotimes _{n=1}^{\infty}S_{q^n}(\widetilde{T_CM})
\otimes
\bigotimes _{m=1}^{\infty}\wedge_{-q^m}(\widetilde{L_C})\right]\varphi(\tau)^{8}{\rm ch}(\mathcal{V}_i)\\\notag
&=e^{\frac{1}{24}A_1}\widehat{A}(TX){\rm exp}(\frac{c}{2})+\left[e^{\frac{1}{24}A_1}\widehat{A}(TX){\rm exp}(\frac{c}{2})
{\rm ch}(\widetilde{T_CM}-\widetilde{L_C}-8+W_i)\right.\\\notag
&\left.-e^{\frac{1}{24}A_1}A_1\widehat{A}(TX){\rm exp}(\frac{c}{2})\right]q+
O(q^2).\notag
\end{align}
Since $Q(M,P_i,\tau)$ is a modular form over $SL_2({\bf Z})$ with the weight $10$, it must be
a multiple of
\begin{equation}
E_4(\tau)E_6(\tau)=1-264q-135432q^2+\cdots.
\end{equation}
So
\begin{align}
&\left\{e^{\frac{1}{24}A_1}\widehat{A}(TX){\rm exp}(\frac{c}{2})
{\rm ch}(\widetilde{T_CM}-\widetilde{L_C}-8+W_i)-e^{\frac{1}{24}A_1}A_1\widehat{A}(TX){\rm exp}(\frac{c}{2})\right\}^{(14)}\\\notag
&=-264\left\{e^{\frac{1}{24}A_1}\widehat{A}(TX){\rm exp}(\frac{c}{2})\right\}^{(14)}.
\end{align}
\begin{thm} One has the following equality:
\begin{align}
&\left\{\widehat{A}(TX){\rm exp}(\frac{c}{2})
{\rm ch}(\widetilde{T_CM}-\widetilde{L_C}+256+W_i)\right\}^{(14)}\\\notag
&=A_1\left\{e^{\frac{1}{24}A_1}\widehat{A}(TX){\rm exp}(\frac{c}{2})-\frac{e^{\frac{1}{24}A_1}-1}{A_1}
\widehat{A}(TX){\rm exp}(\frac{c}{2})
{\rm ch}(\widetilde{T_CM}-\widetilde{L_C}+256+W_i)\right\}^{(10)}
. \notag
\end{align}
\end{thm}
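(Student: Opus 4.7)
The plan is to derive Theorem 2.5 from equation (2.27), exactly in parallel with how Theorem 2.3 was deduced from (2.22). The modular-invariance input, namely Lemma 2.4 together with the one-dimensionality of the space of weight-$10$ modular forms over $SL_{2}(\mathbf{Z})$ (spanned by $E_{4}(\tau)E_{6}(\tau)$), has already produced (2.27) by matching the constant and $q$ coefficients of $Q(M,P_{i},\tau)$ against the $q$-expansion of $E_{4}E_{6}$. So no further modular-forms work is needed; what remains is a purely algebraic rearrangement of (2.27) to expose $A_{1}$ as the obstruction.

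First I would move the right-hand side of (2.27) to the left, and combine it with the $\mathrm{ch}(\widetilde{T_{C}M}-\widetilde{L_{C}}-8+W_{i})$ term. Since $\mathrm{ch}$ of the trivial rank-$n$ bundle contributes the constant $n$, one has $\mathrm{ch}(\widetilde{T_{C}M}-\widetilde{L_{C}}-8+W_{i})+264=\mathrm{ch}(\widetilde{T_{C}M}-\widetilde{L_{C}}+256+W_{i})$. This turns (2.27) into
\begin{equation*}
\bigl\{e^{\frac{1}{24}A_{1}}\widehat{A}(TX)\exp(\tfrac{c}{2})\,\mathrm{ch}(\widetilde{T_{C}M}-\widetilde{L_{C}}+256+W_{i})\bigr\}^{(14)}
=\bigl\{e^{\frac{1}{24}A_{1}}A_{1}\widehat{A}(TX)\exp(\tfrac{c}{2})\bigr\}^{(14)}.
\end{equation*}
Because $A_{1}$ is a $4$-form, the standard degree shift $\{A_{1}\cdot X\}^{(14)}=A_{1}\{X\}^{(10)}$ already lets me rewrite the right-hand side as $A_{1}\bigl\{e^{\frac{1}{24}A_{1}}\widehat{A}(TX)\exp(\tfrac{c}{2})\bigr\}^{(10)}$.

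Next I would split $e^{\frac{1}{24}A_{1}}=1+\bigl(e^{\frac{1}{24}A_{1}}-1\bigr)$ on the left, and write the second summand as $A_{1}\cdot\tfrac{e^{\frac{1}{24}A_{1}}-1}{A_{1}}$, where the quotient is the usual formal series (with lowest term $\tfrac{1}{24}$). Applying the same degree shift to this term pulls an $A_{1}$ outside and drops the degree to $(10)$. Transposing it to the right-hand side and combining the two $(10)$-degree pieces inside one pair of braces yields precisely
\begin{equation*}
\bigl\{\widehat{A}(TX)\exp(\tfrac{c}{2})\mathrm{ch}(\widetilde{T_{C}M}-\widetilde{L_{C}}+256+W_{i})\bigr\}^{(14)}
=A_{1}\bigl\{e^{\frac{1}{24}A_{1}}\widehat{A}(TX)\exp(\tfrac{c}{2})-\tfrac{e^{\frac{1}{24}A_{1}}-1}{A_{1}}\widehat{A}(TX)\exp(\tfrac{c}{2})\mathrm{ch}(\widetilde{T_{C}M}-\widetilde{L_{C}}+256+W_{i})\bigr\}^{(10)},
\end{equation*}
which is the statement of Theorem 2.5.

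There is no genuine obstacle here; the only point requiring a modicum of care is the bookkeeping that $A_{1}$, as a $4$-form on a $14$-dimensional manifold, reduces the target degree from $14$ to $10$, and that the constant $256$ arises as $264-8$ from combining the constant $-8$ inside $\mathrm{ch}$ (coming from the $-16q$ expansion of $\varphi(\tau)^{8}$) with the $-264$ coefficient of $E_{4}E_{6}$. The substantive content—modular invariance and the weight-$10$ identification—is entirely contained in Lemma 2.4 and (2.27), which are in place.
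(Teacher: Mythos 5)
Your proposal is correct and follows exactly the route the paper intends: the modular input is already packaged in the weight-$10$ identity forced by $E_4(\tau)E_6(\tau)$, and the theorem is just the algebraic rearrangement you describe, using $264-8=256$ and the degree shift $\{A_1\cdot X\}^{(14)}=A_1\{X\}^{(10)}$ for the $4$-form $A_1$. (One trivial slip: the $-8$ comes from $\varphi(\tau)^{8}=1-8q+O(q^2)$, not from a ``$-16q$'' term, but this does not affect the argument.)
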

\begin{cor}
When $A_1=0$, we have
\begin{align}
&\left\{\widehat{A}(TX){\rm exp}(\frac{c}{2})
{\rm ch}(\widetilde{T_CM}-\widetilde{L_C}-8+W_i)\right\}^{(14)}\\\notag
&=-264\left\{\widehat{A}(TX){\rm exp}(\frac{c}{2})\right\}^{(14)}
. \notag
\end{align}
When $M$ is a $14$-dimensional $spin^c$ manifold, then ${\rm Ind}D^c\otimes(\widetilde{T_CM}-\widetilde{L_C}-8+W_i)_+$ is a multiply of $264$.
\end{cor}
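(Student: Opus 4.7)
The plan is to derive both claims of this corollary as direct specializations of identities that have already been assembled above, without any new modular computation. For the first equation, I would return to equation (2.29)---the identity obtained by comparing the weight-$10$ modular form $Q(M,P_i,\tau)$ with its forced proportionality to $E_4(\tau)E_6(\tau)$---and simply impose the hypothesis $A_1 = 0$ as a class in $H^{*}(M;{\bf Q})$. Under this hypothesis the exponential $e^{\frac{1}{24}A_1}$ collapses to $1$ and the term $e^{\frac{1}{24}A_1}A_1$ vanishes identically, so (2.29) reduces to
\begin{equation*}
\left\{\widehat{A}(TX){\rm exp}(\tfrac{c}{2}){\rm ch}(\widetilde{T_CM}-\widetilde{L_C}-8+W_i)\right\}^{(14)} = -264\left\{\widehat{A}(TX){\rm exp}(\tfrac{c}{2})\right\}^{(14)},
\end{equation*}
which is exactly the first assertion. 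Equivalently, this is Theorem 2.6 specialized to $A_1=0$: the entire right-hand side there carries an overall factor of $A_1$ and therefore vanishes.

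For the divisibility statement I would invoke the Atiyah--Singer index theorem for the twisted ${\rm spin}^c$ Dirac operator: for any virtual complex vector bundle $V$ over the $14$-dimensional ${\rm spin}^c$ manifold $M$,
\begin{equation*}
{\rm Ind}(D^c \otimes V) = \left\{\widehat{A}(TM){\rm exp}(\tfrac{c}{2}){\rm ch}(V)\right\}^{(14)} \in {\bf Z}.
\end{equation*}
Applying this with $V=\widetilde{T_CM}-\widetilde{L_C}-8+W_i$ on the left and with $V$ the trivial line bundle on the right, the characteristic-form identity established above becomes
\begin{equation*}
{\rm Ind}\bigl(D^c\otimes(\widetilde{T_CM}-\widetilde{L_C}-8+W_i)\bigr) = -264\cdot {\rm Ind}(D^c),
\end{equation*}
an equality of integers whose right member is manifestly a multiple of $264$.

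The only small piece of checking is verifying that $\widetilde{T_CM}-\widetilde{L_C}-8+W_i$ genuinely assembles to an element of $K(M)$, so that Atiyah--Singer applies; this is immediate from the convention $\widetilde{E}=E-{\rm rk}\,E$ and from $W_i=P_i\times_{\rho_1}V_1$ being the associated complex vector bundle of the $248$-dimensional adjoint representation of $E_8$. I do not anticipate any real obstacle: the substantive work has already been done in proving Lemma 2.5 and in comparing $q$-coefficients to reach (2.29), and the corollary is best viewed as the clean integral consequence of that modular identity under the topological vanishing $A_1=0$.
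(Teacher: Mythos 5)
Your proposal is correct and follows the paper's (implicit) argument exactly: the first identity is just equation (2.29) — the $q$-coefficient comparison forced by $Q(M,P_i,\tau)$ being a multiple of $E_4(\tau)E_6(\tau)$ — specialized to $A_1=0$, and the divisibility by $264$ then follows from the Atiyah–Singer index theorem for the twisted $spin^c$ Dirac operator, since both sides become integers. Nothing is missing.
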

If $M$ is a $10$-dimensional $spin^c$ manifold, we let

 \begin{align}
R(M,P_i,\tau)=&\left\{e^{\frac{1}{24}E_2(\tau)A_1}\widehat{A}(TX){\rm exp}(\frac{c}{2}){\rm ch}\left[\bigotimes _{n=1}^{\infty}S_{q^n}(\widetilde{T_CM})
\otimes
\bigotimes _{m=1}^{\infty}\wedge_{-q^m}(\widetilde{L_C})\right]\right.\\\notag
&\left.\cdot\varphi(\tau)^{8}{\rm ch}(\mathcal{V}_i)\right\}^{(10)}.
\end{align}
Then
\begin{align}&Q(M,P_i,\tau)=\left\{e^{\frac{1}{24}E_2(\tau)A_1}\left(\prod_{j=1}^{5}\frac{x_j\theta'(0,\tau)}{\theta(x_j,\tau)}\right)
\frac{\sqrt{-1}\theta(u,\tau)}{\theta_1(0,\tau)\theta_2(0,\tau)
\theta_3(0,\tau)}\right.\\\notag
&\left.\frac{1}{2}\left(\prod_{l=1}^8\theta_1(y_l^i,\tau)+\prod_{l=1}^8\theta_2(y_l^i,\tau)+\prod_{l=1}^8\theta_3(y_l^i,\tau)\right)
\right\}^{(10)}.
\end{align}
Similar to Lemma 2.2, we have
\begin{lem}
$R(M,P_i,\tau)$ is a modular form over $SL_2({\bf Z})$ with the weight $8$.
\end{lem}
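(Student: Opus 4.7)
The plan is to follow the same template used in Lemma 2.2 (and echoed for Lemma 2.5): rewrite $R(M,P_i,\tau)$ in the theta-function product form displayed in (2.31), then check invariance under each of the two generators $T:\tau\mapsto\tau+1$ and $S:\tau\mapsto-1/\tau$ of $SL_2(\mathbf{Z})$ using the transformation laws (2.12)--(2.16) and (2.7). The identification (2.31) itself comes from combining (2.5), the standard Jacobi-theta expressions for $\widehat{A}(TX)\,\mathrm{ch}(\bigotimes S_{q^n}\widetilde{T_CM})$ and $\mathrm{ch}(\bigotimes\wedge_{-q^m}\widetilde{L_C})\mathrm{exp}(c/2)$, and the presence of $\varphi(\tau)^8$ which absorbs the Dedekind-eta factors.

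For the $T$-transformation: $E_2(\tau)$ is $T$-invariant by (2.7); every $\theta$, $\theta_1$, and $\theta'(0,\cdot)$ factor acquires the phase $e^{\pi\sqrt{-1}/4}$, and these phases cancel between the numerator ($\theta'(0,\tau)^{5}$ together with $\theta(u,\tau)$ and $\theta_1(0,\tau)$) and the denominator ($\theta(x_j,\tau)^{5}$ together with $\theta_2(0,\tau)\theta_3(0,\tau)$) after accounting for the swap $\theta_2\leftrightarrow\theta_3$. That same swap permutes the second and third summands of each factor $\prod_l\theta_1(y_l^i,\tau)+\prod_l\theta_2(y_l^i,\tau)+\prod_l\theta_3(y_l^i,\tau)$ and hence leaves it fixed. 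Therefore $R(M,P_i,\tau+1)=R(M,P_i,\tau)$.

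The main work, as in the proofs of Lemmas 2.2 and 2.5, is the $S$-transformation. Counting $\tau$-powers: $\theta'(0,\tau)$ contributes weight $3/2$ and each $\theta(\cdot,\tau)$, $\theta_a(\cdot,\tau)$ contributes weight $1/2$ under $\tau\mapsto-1/\tau$. In (2.31) the numerator holds five copies of $\theta'(0,\tau)$ and one copy of $\theta(u,\tau)$, giving total weight $15/2+1/2=8$; the denominator contributes $\theta(x_j,\tau)^{5}\theta_1(0,\tau)\theta_2(0,\tau)\theta_3(0,\tau)$ of weight $5/2+3/2=4$; and each of the three summands in the $E_8$-factor is a product of eight theta functions, contributing weight $4$. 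The net power of $\tau$ is $\tau^{8-4+4}=\tau^{8}$, which is the announced weight.

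The remaining and crucial step is to verify that the spurious exponential factors produced by the $S$-transformation cancel. Applying (2.12)--(2.15) produces an overall factor $\exp\bigl(\pi\sqrt{-1}\tau(u^2-\sum_{j=1}^{5}x_j^2+\sum_{l=1}^{8}(y_l^i)^2)\bigr)$ (the $E_8$-piece generates a single exponential outside the three summands because the phase $\exp(\pi\sqrt{-1}\tau\sum_l(y_l^i)^2)$ is common to the three products), while (2.7) gives $e^{\frac{1}{24}E_2(-1/\tau)A_1}=e^{\frac{\tau^2}{24}E_2(\tau)A_1}\cdot e^{-\frac{\sqrt{-1}\tau}{4\pi}A_1}$. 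Using (2.6) together with $\sum_j(2\pi\sqrt{-1}x_j)^2=p_1(M)$ and $(2\pi\sqrt{-1}u)^2=-p_1(L_R)$, the exponent $\pi\sqrt{-1}\tau(u^2-\sum x_j^2+\sum(y_l^i)^2)$ equals $\frac{\sqrt{-1}\tau}{4\pi}A_1$, which is exactly the anomaly piece from $E_2$ with opposite sign. After this cancellation the remaining structure reassembles into $R(M,P_i,\tau)$ multiplied by $\tau^{8}$, completing the proof. The only real obstacle is bookkeeping: keeping the signs and powers of $2\pi\sqrt{-1}$ consistent in the cancellation step, exactly as already performed for Lemma 2.2.
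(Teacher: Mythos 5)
Your proposal is correct and is essentially the paper's own (largely unwritten) argument: the paper proves Lemma~2.8 only by the phrase ``Similar to Lemma 2.2,'' which in turn rests on (2.7) and the theta transformation laws (2.11)--(2.15), and your $T$- and $S$-checks, weight count, and cancellation of the anomaly exponential against the $E_2$-factor are exactly the details being invoked. One sign should be fixed: since $p_1(L_R)=c_1(L)^2=c^2$, the identity to use is $(2\pi\sqrt{-1}u)^2=+p_1(L_R)$, not $-p_1(L_R)$; as written your $u^2$-term would fail to cancel, whereas with the correct sign one indeed gets $\pi\sqrt{-1}\tau\bigl(u^2-\sum_{j}x_j^2+\sum_{l}(y_l^i)^2\bigr)=\frac{\sqrt{-1}\tau}{4\pi}A_1$, matching the $E_2$ anomaly term. (Your $\tau$-power bookkeeping also quietly omits the factor $\tau^5$ coming from rescaling the arguments $x_j,u,y_l^i\mapsto\tau x_j,\tau u,\tau y_l^i$ and taking the degree-$10$ component, which is exactly offset by the normalization of the five factors $x_j\theta'(0,\tau)/\theta(x_j,\tau)$, so the total $\tau^8$ is right.)
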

By (2.27) and Lemma 2.8, it must be
a multiple of
\begin{equation}
E_4(\tau)^2=1+480q+61920q^2+\cdots.
\end{equation}
So
\begin{align}
&\left\{e^{\frac{1}{24}A_1}\widehat{A}(TX){\rm exp}(\frac{c}{2})
{\rm ch}(\widetilde{T_CM}-\widetilde{L_C}-8+W_i)-e^{\frac{1}{24}A_1}A_1\widehat{A}(TX){\rm exp}(\frac{c}{2})\right\}^{(10)}\\\notag
&=480\left\{e^{\frac{1}{24}A_1}\widehat{A}(TX){\rm exp}(\frac{c}{2})\right\}^{(10)}.
\end{align}
\begin{thm} One has the following equality:
\begin{align}
&\left\{\widehat{A}(TX){\rm exp}(\frac{c}{2})
{\rm ch}(\widetilde{T_CM}-\widetilde{L_C}-488+W_i)\right\}^{(10)}\\\notag
&=A_1\left\{e^{\frac{1}{24}A_1}\widehat{A}(TX){\rm exp}(\frac{c}{2})-\frac{e^{\frac{1}{24}A_1}-1}{A_1}
\widehat{A}(TX){\rm exp}(\frac{c}{2})
{\rm ch}(\widetilde{T_CM}-\widetilde{L_C}-488+W_i)\right\}^{(6)}
. \notag
\end{align}
\end{thm}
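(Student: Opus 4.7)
My plan is to derive the theorem as a purely algebraic consequence of the identity obtained just above its statement, using exactly the same rearrangement that led from (2.22) to Theorem 2.3 and from (2.28) to Theorem 2.5. All of the real work—checking that $R(M,P_i,\tau)$ is a weight-$8$ modular form over $SL_2({\bf Z})$ (Lemma 2.8), invoking the fact that the space of such forms is one-dimensional and spanned by $E_4(\tau)^2 = 1+480q+\cdots$, and matching the first two $q$-coefficients of $R(M,P_i,\tau)$ read off from the expansion analogous to (2.27)—has already been carried out and delivered
\[
\left\{e^{\frac{1}{24}A_1}\widehat{A}(TX){\rm exp}(\tfrac{c}{2}){\rm ch}(\widetilde{T_CM}-\widetilde{L_C}-8+W_i)-e^{\frac{1}{24}A_1}A_1\widehat{A}(TX){\rm exp}(\tfrac{c}{2})\right\}^{(10)}=480\left\{e^{\frac{1}{24}A_1}\widehat{A}(TX){\rm exp}(\tfrac{c}{2})\right\}^{(10)}.
\]

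Starting from this identity I would first move the $480$ to the left-hand side and absorb it into the Chern character, using that ${\rm ch}$ of an integer virtual rank is that integer, so that $-8-480=-488$ appears inside ${\rm ch}$. Since $A_1$ is a pure degree-$4$ form, the remaining right-hand side reduces to $A_1\{e^{\frac{1}{24}A_1}\widehat{A}(TX){\rm exp}(\frac{c}{2})\}^{(6)}$. Next I would split $e^{\frac{1}{24}A_1}=1+(e^{\frac{1}{24}A_1}-1)$ on the left, transfer the second summand to the right, and use the formal identity $e^{\frac{1}{24}A_1}-1 = A_1\cdot\frac{e^{\frac{1}{24}A_1}-1}{A_1}$ (a polynomial in $A_1$ starting with $\tfrac{1}{24}$) together with the same degree shift to convert it into $A_1\{\frac{e^{\frac{1}{24}A_1}-1}{A_1}\widehat{A}(TX){\rm exp}(\frac{c}{2}){\rm ch}(\widetilde{T_CM}-\widetilde{L_C}-488+W_i)\}^{(6)}$. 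Gathering the two terms on the right under a single bracket produces the claimed equality.

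I do not anticipate any genuine obstacle. The only step requiring a word of care is the formal division by $A_1$, which is unambiguous because $A_1$ has positive even degree, so $\frac{e^{A_1/24}-1}{A_1}$ is a finite polynomial in $A_1$ and defines a bona fide characteristic form on $M$; all analytic and modular-invariance content has already been absorbed into Lemma 2.8 and the identification of the multiplier of $E_4(\tau)^2$, so what remains is essentially bookkeeping parallel to the proofs of Theorems 2.3 and 2.5.
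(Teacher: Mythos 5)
Your proposal is correct and follows exactly the route the paper intends: Theorem 2.9 is obtained from the identity $\{e^{\frac{1}{24}A_1}\widehat{A}(TX)\mathrm{exp}(\frac{c}{2})\mathrm{ch}(\widetilde{T_CM}-\widetilde{L_C}-8+W_i)-e^{\frac{1}{24}A_1}A_1\widehat{A}(TX)\mathrm{exp}(\frac{c}{2})\}^{(10)}=480\{e^{\frac{1}{24}A_1}\widehat{A}(TX)\mathrm{exp}(\frac{c}{2})\}^{(10)}$ (itself a consequence of Lemma 2.8 and the fact that the weight-$8$ form must be a multiple of $E_4(\tau)^2=1+480q+\cdots$) by absorbing $-480$ into the Chern character to produce $-488$, splitting $e^{\frac{1}{24}A_1}=1+A_1\cdot\frac{e^{\frac{1}{24}A_1}-1}{A_1}$, and using that $A_1$ has degree $4$ to drop from degree $10$ to degree $6$. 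The bookkeeping, including the sign and the constant $-8-480=-488$, checks out, and this is the same rearrangement used for Theorems 2.3 and 2.5.
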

\begin{cor}
When $A_1=0$, we have
\begin{align}
&\left\{\widehat{A}(TX){\rm exp}(\frac{c}{2})
{\rm ch}(\widetilde{T_CM}-\widetilde{L_C}+W_i)\right\}^{(10)}=488\left\{\widehat{A}(TX){\rm exp}(\frac{c}{2})\right\}^{(10)}
. \notag
\end{align}
When $M$ is a $10$-dimensional $spin^c$ manifold, then ${\rm Ind}D^c\otimes(\widetilde{T_CM}-\widetilde{L_C}+W_i)_+$ is a multiply of $488$.
\end{cor}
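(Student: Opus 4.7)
The plan is to read the corollary directly off the displayed identity immediately preceding Theorem 2.9 by specializing to $A_1 = 0$, and then to extract the integrality statement from the Atiyah--Singer index theorem.

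First I would set $A_1 = 0$ in that displayed identity. The factor $e^{\frac{1}{24}A_1}$ collapses to $1$, and the term proportional to $A_1$ on the left-hand side drops out entirely, leaving
\[
\left\{\widehat{A}(TX){\rm exp}(\tfrac{c}{2})\,{\rm ch}(\widetilde{T_CM}-\widetilde{L_C}-8+W_i)\right\}^{(10)} = 480\left\{\widehat{A}(TX){\rm exp}(\tfrac{c}{2})\right\}^{(10)}.
\]
Next I would split the Chern character additively via ${\rm ch}(\widetilde{T_CM}-\widetilde{L_C}-8+W_i) = {\rm ch}(\widetilde{T_CM}-\widetilde{L_C}+W_i) - 8$, and transpose the constant $-8$ contribution across the equality. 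This absorbs an extra $8\left\{\widehat{A}(TX){\rm exp}(\tfrac{c}{2})\right\}^{(10)}$ into the right-hand side, producing the coefficient $480 + 8 = 488$ that appears in the first assertion of the corollary.

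For the divisibility, I would invoke the Atiyah--Singer index theorem for the $spin^c$ Dirac operator: for any $E \in K(M)$ on the $10$-dimensional $spin^c$ manifold $M$,
\[
{\rm Ind}(D^c \otimes E) = \left\{\widehat{A}(TX){\rm exp}(\tfrac{c}{2}){\rm ch}(E)\right\}^{(10)},
\]
which is automatically an integer. Taking $E = (\widetilde{T_CM}-\widetilde{L_C}+W_i)_+$ and combining with the cohomological identity just proved, the twisted index equals $488\cdot{\rm Ind}(D^c)$, which is manifestly a multiple of $488$.

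All of the genuine analytic work---the modular invariance of $R(M,P_i,\tau)$ established in Lemma 2.8, together with the identification of the leading $q$-coefficient $480$ coming from $E_4(\tau)^2$---has already been carried out. Consequently I do not anticipate a substantive obstacle; what remains is only the additivity of the Chern character, one arithmetic simplification ($480 + 8 = 488$), and a single invocation of the Atiyah--Singer index formula to translate the cohomological identity into the stated divisibility of an integer index.
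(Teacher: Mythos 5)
Your proposal is correct and follows essentially the same route as the paper: the corollary is just the identity displayed before Theorem 2.9 (the weight-$8$ modular form being a multiple of $E_4(\tau)^2$, whose $q$-coefficient is $480$) specialized to $A_1=0$, with the constant $-8$ in the Chern character moved to the other side to give $480+8=488$, and the divisibility then follows from the Atiyah--Singer index theorem for the $spin^c$ Dirac operator exactly as you describe. No gaps.
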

In the following, we consider the $q^2$-terms in the $q$-expansions of $Q(M,P_i,P_j,\tau),~Q(M,P_i,\tau)$, $R(M,P_i,\tau).$ Let $\overline{W_i}=
P_i\times_{\rho_2}V_2$. We assume that $A=0$, then

\begin{align}
&e^{\frac{1}{24}E_2(\tau)A}\widehat{A}(TX){\rm exp}(\frac{c}{2}){\rm ch}\left[\bigotimes _{n=1}^{\infty}S_{q^n}(\widetilde{T_CM})
\otimes
\bigotimes _{m=1}^{\infty}\wedge_{-q^m}(\widetilde{L_C})\right]\varphi(\tau)^{16}{\rm ch}(\mathcal{V}_i){\rm ch}(\mathcal{V}_j)\\\notag
&=\widehat{A}(TX){\rm exp}(\frac{c}{2})
{\rm ch}\left[1+qB_1+q^2B_2+O(q^3)\right]
\\\notag
&\cdot(1-16q+104q^2))(1+{\rm ch}({W_i})q+q^2{\rm ch}(\overline{W_i})+O(q^3))(1+{\rm ch}({W_j})q+q^2{\rm ch}(\overline{W_j})+O(q^3))\\\notag
&=\widehat{A}(TX){\rm exp}(\frac{c}{2})+\left[\widehat{A}(TX){\rm exp}(\frac{c}{2})
{\rm ch}(\widetilde{T_CM}-\widetilde{L_C}-16+W_i+W_j)\right]q\\\notag
&+\left\{\widehat{A}(TX){\rm exp}(\frac{c}{2})({\rm ch}(\overline{W_j})+{\rm ch}(\overline{W_i})+
{\rm ch}({W_i}){\rm ch}({W_j})-16{\rm ch}({W_j})-16{\rm ch}({W_i})+104\right.\\\notag
&\left.+B_1\wedge[{\rm ch}({W_j})+{\rm ch}({W_i})-16]+B_2)\right\}q^2
+O(q^3).\notag
\end{align}
where
\begin{align}
&B_1=\widetilde{T_CM}-\widetilde{L_C},\\\notag
&B_2=\wedge^2\widetilde{L_C}-\widetilde{L_C}
-\widetilde{T_CM}\otimes \widetilde{L_C}+S^2\widetilde{T_CM}+\widetilde{T_CM}.\notag
\end{align}
By (2.21) and (2.37), we get
\begin{thm} When $A=0$, we have
\begin{align}
&\left\{\widehat{A}(TX){\rm exp}(\frac{c}{2})({\rm ch}(\overline{W_j})+{\rm ch}(\overline{W_i})+
{\rm ch}({W_i}){\rm ch}({W_j})-16{\rm ch}({W_j})-16{\rm ch}({W_i})+104\right.\\\notag
&\left.+B_1\wedge[{\rm ch}({W_j})+{\rm ch}({W_i})-16]+B_2)\right\}^{(14)}\\\notag
&=-196632\left\{\widehat{A}(TX){\rm exp}(\frac{c}{2})\right\}^{(14)}.\notag
\end{align}
\end{thm}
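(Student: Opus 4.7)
The plan is to repeat the modular-forms argument that gave Theorem 2.5, but now reading off the $q^{2}$-coefficient instead of the $q^{1}$-coefficient of the $q$-expansion of $Q(M,P_{i},P_{j},\tau)$. The setup is already in place: by Lemma 2.2, $Q(M,P_{i},P_{j},\tau)$ is a modular form of weight $14$ over $SL_{2}({\bf Z})$, and the space of such forms is one-dimensional, spanned by $E_{4}(\tau)^{2}E_{6}(\tau)$. Hence there is a characteristic form $\lambda$ with
\begin{equation*}
Q(M,P_{i},P_{j},\tau)=\lambda\cdot E_{4}(\tau)^{2}E_{6}(\tau)=\lambda\bigl(1-24q-196632q^{2}+O(q^{3})\bigr).
\end{equation*}

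Under the hypothesis $A=0$, the factor $e^{E_{2}(\tau)A/24}$ in the definition (2.9) of $Q(M,P_{i},P_{j},\tau)$ is identically $1$, so the $q$-expansion (2.36) gives immediately
\begin{equation*}
\lambda=\bigl\{\widehat{A}(TX)\exp(c/2)\bigr\}^{(14)}.
\end{equation*}
Matching the $q^{1}$-coefficients recovers Corollary 2.6 with coefficient $-24$; matching the $q^{2}$-coefficients is exactly the statement of the theorem, once one identifies the $q^{2}$-coefficient of the product expansion displayed in (2.36).

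For that identification I would expand each factor to order $q^{2}$: $\bigotimes_{n}S_{q^{n}}(\widetilde{T_{C}M})\otimes \bigotimes_{m}\wedge_{-q^{m}}(\widetilde{L_{C}})=1+qB_{1}+q^{2}B_{2}+O(q^{3})$ with $B_{1},B_{2}$ as in (2.37); $\varphi(\tau)^{16}=\prod_{n\geq 1}(1-q^{n})^{16}=1-16q+104q^{2}+O(q^{3})$ (the $104=\binom{16}{2}-16$ comes from combining the $(1-q)^{16}$ and $(1-q^{2})^{16}$ factors); and ${\rm ch}(\mathcal{V}_{i})=1+{\rm ch}(W_{i})q+{\rm ch}(\overline{W_{i}})q^{2}+O(q^{3})$ by definition of $\mathcal{V}_{i}$, and similarly for $j$. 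Multiplying these out and collecting the $q^{2}$-term reproduces precisely the expression
\begin{equation*}
{\rm ch}(\overline{W_{j}})+{\rm ch}(\overline{W_{i}})+{\rm ch}(W_{i}){\rm ch}(W_{j})-16{\rm ch}(W_{j})-16{\rm ch}(W_{i})+104+B_{1}\wedge[{\rm ch}(W_{j})+{\rm ch}(W_{i})-16]+B_{2}
\end{equation*}
appearing in the theorem. Equating this $q^{2}$-coefficient (after multiplication by $\widehat{A}(TX)\exp(c/2)$ and taking the $14$-form part) with $-196632\lambda$ delivers the asserted equality.

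There is no conceptual obstacle; the whole content is the one-dimensionality of the weight-$14$ modular forms together with the two known constants $-24$ (used for Theorem 2.5) and $-196632$ from the $q$-expansion (2.21) of $E_{4}^{2}E_{6}$. The only place where care is needed is the bookkeeping of the $q^{2}$-coefficient: correctly accounting for the cross terms $B_{1}\cdot({\rm ch}(W_{i})+{\rm ch}(W_{j})-16)$, the $\varphi(\tau)^{16}$-contribution $104$, and the products ${\rm ch}(W_{i}){\rm ch}(W_{j})$. Once these are in hand, the theorem follows by direct comparison of $q^{2}$-coefficients, entirely analogous to the derivation of (2.22) from (2.18) and (2.21).
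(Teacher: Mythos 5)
Your proposal is correct and follows essentially the same route as the paper: since $Q(M,P_i,P_j,\tau)$ is a weight-$14$ modular form over $SL_2({\bf Z})$ it is a multiple of $E_4(\tau)^2E_6(\tau)$, the constant term identifies the multiple as $\{\widehat{A}(TX)\exp(c/2)\}^{(14)}$ when $A=0$, and comparing $q^2$-coefficients of the expansion (2.36) against $-196632$ from (2.21) gives the theorem. Your bookkeeping of the $q^2$-term (the $104$ from $\varphi(\tau)^{16}$, the ${\rm ch}(\overline{W_i})$ terms from $\mathcal{V}_i$, and the cross terms) matches the paper's display (2.36) exactly.
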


We assume that $A_1=0$, then
\begin{align}
&e^{\frac{1}{24}E_2(\tau)A_1}\widehat{A}(TX){\rm exp}(\frac{c}{2}){\rm ch}\left[\bigotimes _{n=1}^{\infty}S_{q^n}(\widetilde{T_CM})
\otimes
\bigotimes _{m=1}^{\infty}\wedge_{-q^m}(\widetilde{L_C})\right]\varphi(\tau)^{8}{\rm ch}(\mathcal{V}_i)\\\notag
&=\widehat{A}(TX){\rm exp}(\frac{c}{2})
{\rm ch}\left[1+qB_1+q^2B_2+O(q^3)\right]
\\\notag
&\cdot(1-8q+20q^2))(1+{\rm ch}({W_i})q+q^2{\rm ch}(\overline{W_i})+O(q^3)))\\\notag
&=\widehat{A}(TX){\rm exp}(\frac{c}{2})+\left[\widehat{A}(TX){\rm exp}(\frac{c}{2})
{\rm ch}(\widetilde{T_CM}-\widetilde{L_C}-8+W_i)\right]q\\\notag
&+\left\{\widehat{A}(TX){\rm exp}(\frac{c}{2})(20+{\rm ch}(\overline{W_i})
-8{\rm ch}({W_i})\right.\\\notag
&\left.+B_1\wedge[{\rm ch}({W_i})-8]+B_2)\right\}q^2
+O(q^3).\notag
\end{align}
By (2.28) and (2.40), we get
\begin{thm} When $A_1=0$, we have
\begin{align}
&\left\{\widehat{A}(TX){\rm exp}(\frac{c}{2})(20+{\rm ch}(\overline{W_i})
-8{\rm ch}({W_i})+B_1\wedge[{\rm ch}({W_i})-8]+B_2)\right\}^{(14)}\\\notag
&=-135432\left\{\widehat{A}(TX){\rm exp}(\frac{c}{2})\right\}^{(14)}.\notag
\end{align}
\end{thm}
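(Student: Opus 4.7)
The plan is to apply exactly the modular-forms argument used in Theorem 2.9 and Theorem 2.10, but now extract the $q^2$-coefficient rather than the $q^1$-coefficient. All the necessary ingredients are already in place: Lemma 2.6 establishes that $Q(M,P_i,\tau)$ is a weight-$10$ modular form over $SL_2(\mathbf{Z})$, the expansion (2.40) records the $q$-series of its integrand when $A_1 = 0$, and the expansion (2.28) records the $q$-series of the generator $E_4(\tau)E_6(\tau)$ of the one-dimensional space of weight-$10$ modular forms on $SL_2(\mathbf{Z})$.

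First I would note that, since this space is one-dimensional, $Q(M,P_i,\tau)$ must be a scalar multiple of $E_4(\tau)E_6(\tau)$. Under the hypothesis $A_1 = 0$ the prefactor $e^{\frac{1}{24}E_2(\tau)A_1}$ collapses to $1$, and reading the constant-in-$q$ term of (2.40) identifies the scalar as $\{\widehat{A}(TX)\exp(c/2)\}^{(14)}$. Consequently
$$Q(M,P_i,\tau) = \{\widehat{A}(TX)\exp(c/2)\}^{(14)} \cdot E_4(\tau)E_6(\tau).$$

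Next I would equate $q^2$-coefficients on both sides of this identity. By (2.28) the right-hand side contributes $-135432\{\widehat{A}(TX)\exp(c/2)\}^{(14)}$. By (2.40) the left-hand side contributes
$$\bigl\{\widehat{A}(TX)\exp(c/2)\bigl(20 + {\rm ch}(\overline{W_i}) - 8{\rm ch}(W_i) + B_1\wedge[{\rm ch}(W_i) - 8] + B_2\bigr)\bigr\}^{(14)},$$
so matching the two yields the claimed identity at once.

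The only piece of the argument that really requires care is the $q^2$-bookkeeping behind (2.40): one must multiply out $\varphi(\tau)^8 = 1 - 8q + 20q^2 + O(q^3)$, the symmetric/antisymmetric power-series factor in $\widetilde{T_CM}$ and $\widetilde{L_C}$ that packages $B_1$ and $B_2$, and the expansion ${\rm ch}(\mathcal{V}_i) = 1 + {\rm ch}(W_i)q + {\rm ch}(\overline{W_i})q^2 + O(q^3)$, then collect $q^2$-terms. This is purely mechanical and presents no conceptual obstacle; once this bookkeeping is in place, the theorem follows mechanically, with all of the conceptual work having been done by the construction of $Q(M,P_i,\tau)$ earlier in the section.
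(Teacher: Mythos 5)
Your proposal is correct and follows exactly the paper's own argument: the paper likewise uses that $Q(M,P_i,\tau)$ is a weight-$10$ modular form over $SL_2(\mathbf{Z})$, hence a multiple of $E_4(\tau)E_6(\tau)$ determined by its constant term, and then compares the $q^2$-coefficients of (2.28) and (2.40) to read off the factor $-135432$. No difference in approach worth noting.
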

Similarly, we have:
\begin{thm} When $A_1=0$, we have
\begin{align}
&\left\{\widehat{A}(TX){\rm exp}(\frac{c}{2})(20+{\rm ch}(\overline{W_i})
-8{\rm ch}({W_i})+B_1\wedge[{\rm ch}({W_i})-8]+B_2)\right\}^{(10)}\\\notag
&=61920\left\{\widehat{A}(TX){\rm exp}(\frac{c}{2})\right\}^{(10)}.\notag
\end{align}
\end{thm}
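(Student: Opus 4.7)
The plan is to mimic exactly the argument used to obtain Theorem 2.11, but starting from the weight $8$ modular form $R(M,P_i,\tau)$ of (2.32) rather than the weight $10$ form $Q(M,P_i,\tau)$, and matching $q^2$-coefficients in the degree $10$ component instead of the degree $14$ component. Since $R(M,P_i,\tau)$ is a modular form over $SL_2(\mathbf{Z})$ of weight $8$ (Lemma 2.8), and the space of such forms is one-dimensional, spanned by $E_4(\tau)^2$, the form $R(M,P_i,\tau)$ must be a scalar multiple of $E_4(\tau)^2 = 1+480q+61920q^2+\cdots$. Reading off the constant term gives that scalar as $\bigl\{\widehat{A}(TX)\exp(c/2)\bigr\}^{(10)}$, so the $q^n$-coefficient of $R$ equals the $q^n$-coefficient of $E_4(\tau)^2$ times this quantity.

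Next I would expand the integrand of $R(M,P_i,\tau)$ as a $q$-series up to order $q^2$. Assuming $A_1=0$ kills the exponential prefactor $e^{\frac{1}{24}E_2(\tau)A_1}$ to leading order and erases the $-e^{\frac{1}{24}A_1}A_1\widehat{A}(TX)\exp(c/2)$ contributions; the computation is then formally identical to (2.40), the only change being that we take the degree $10$ part of the same differential form polynomial in $\widehat{A}(TX)$, $\exp(c/2)$, $B_1$, $B_2$, $\mathrm{ch}(W_i)$ and $\mathrm{ch}(\overline{W_i})$. Thus the $q^2$-coefficient of $R(M,P_i,\tau)$ is
\begin{equation*}
\bigl\{\widehat{A}(TX)\exp(c/2)\bigl(20+\mathrm{ch}(\overline{W_i})-8\mathrm{ch}(W_i)+B_1\wedge[\mathrm{ch}(W_i)-8]+B_2\bigr)\bigr\}^{(10)}.
\end{equation*}

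Equating this with $61920\bigl\{\widehat{A}(TX)\exp(c/2)\bigr\}^{(10)}$ yields the claimed identity.

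The only genuine step is checking that the $q^2$-expansion (2.40), derived with top dimension $14$ in mind, is valid verbatim at the cochain level and only then truncated to degree $10$; since (2.40) is an identity of characteristic forms before taking the top degree component, no obstruction arises. The main (mild) bookkeeping obstacle is simply to confirm that the three terms from $E_4(\tau)^2$ (namely $1,480q,61920q^2$) correctly match the $q^0,q^1,q^2$ coefficients obtained from (2.40) at degree $10$, which for the $q^1$-coefficient recovers exactly Corollary 2.10 and for $q^2$ gives the new identity.
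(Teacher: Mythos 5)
Your proposal is correct and follows exactly the route the paper intends: the paper derives Theorem 2.13 "similarly" to Theorem 2.12, i.e.\ by noting that $R(M,P_i,\tau)$ is a weight $8$ modular form over $SL_2(\mathbf{Z})$, hence a multiple of $E_4(\tau)^2=1+480q+61920q^2+\cdots$ with the multiple fixed by the constant term $\{\widehat{A}(TX)\exp(\frac{c}{2})\}^{(10)}$, and then matching the $q^2$-coefficient against the degree-$10$ part of the expansion (2.40). Your observation that (2.40) is an identity of forms valid in all degrees, so it can be truncated at degree $10$ instead of $14$, is exactly the point that makes the paper's "similarly" legitimate.
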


\section{Anomaly cancellation formulas for $12$-dimensional manifolds}
\indent In this section, we let $M$ be a $12$-dimensional manifold.
Let $\widehat{L}(TM,\nabla^{ TM})$
 be the Hirzebruch characteristic forms defined by (\cite{Zh})
 $$\widehat{L}(TM,\nabla^{ TM})={\rm
 det}^{\frac{1}{2}}\left(\frac{\frac{\sqrt{-1}}{2\pi}R^{TM}}{{\rm
 tanh}(\frac{\sqrt{-1}}{4\pi}R^{TM})}\right).$$
Let
$A_2:=\frac{1}{30}(c_2(W_i)+c_2(W_j)).$
Let \begin{align}
{Q_1}(M,P_i,P_j,\tau)=&\left\{e^{\frac{1}{24}E_2(\tau)A_2}\widehat{L}(TM,\nabla^{ TM}){\rm ch}\left[\bigotimes _{n=1}^{\infty}S_{q^n}(\widetilde{T_CM})
\otimes
\bigotimes _{m=1}^{\infty}\wedge_{q^m}(\widetilde{T_CM})\right]\right.\\\notag
&\left.\cdot\varphi(\tau)^{16}{\rm ch}(\mathcal{V}_i){\rm ch}(\mathcal{V}_j)\right\}^{(12)},
\end{align}
\begin{align}
{Q_2}(M,P_i,P_j,\tau)=&\left\{e^{\frac{1}{24}E_2(\tau)A_2}\widehat{A}(TM,\nabla^{ TM}){\rm ch}\left[\bigotimes _{n=1}^{\infty}S_{q^n}(\widetilde{T_CM})
\otimes
\bigotimes _{m=1}^{\infty}\wedge_{-q^{m-\frac{1}{2}}}(\widetilde{T_CM})\right]\right.\\\notag
&\left.\cdot\varphi(\tau)^{16}{\rm ch}(\mathcal{V}_i){\rm ch}(\mathcal{V}_j)\right\}^{(12)},
\end{align}
Then
\begin{align}&Q_1(M,P_i,P_j,\tau)=2^6\left\{e^{\frac{1}{24}E_2(\tau)A_2}\left(\prod_{j=1}^{6}\frac{x_j\theta'(0,\tau)\theta_1(x_j,\tau)}
{\theta(x_j,\tau)\theta_1(0,\tau)}\right)
\right.\\\notag
&\left.\frac{1}{4}\left(\prod_{l=1}^8\theta_1(y_l^i,\tau)+\prod_{l=1}^8\theta_2(y_l^i,\tau)+\prod_{l=1}^8\theta_3(y_l^i,\tau)\right)
\left(\prod_{l=1}^8\theta_1(y_l^j,\tau)+\prod_{l=1}^8\theta_2(y_l^j,\tau)+\prod_{l=1}^8\theta_3(y_l^j,\tau)\right)
\right\}^{(12)},
\end{align}
and
\begin{align}&Q_2(M,P_i,P_j,\tau)=\left\{e^{\frac{1}{24}E_2(\tau)A_2}\left(\prod_{j=1}^{6}\frac{x_j\theta'(0,\tau)\theta_2(x_j,\tau)}
{\theta(x_j,\tau)\theta_2(0,\tau)}\right)
\right.\\\notag
&\left.\frac{1}{4}\left(\prod_{l=1}^8\theta_1(y_l^i,\tau)+\prod_{l=1}^8\theta_2(y_l^i,\tau)+\prod_{l=1}^8\theta_3(y_l^i,\tau)\right)
\left(\prod_{l=1}^8\theta_1(y_l^j,\tau)+\prod_{l=1}^8\theta_2(y_l^j,\tau)+\prod_{l=1}^8\theta_3(y_l^j,\tau)\right)
\right\}^{(12)},
\end{align}
Let $$\Gamma_0(2)=\left\{\left(\begin{array}{cc}
\ a & b  \\
 c  & d
\end{array}\right)\in SL_2({\bf Z})\mid c\equiv 0~({\rm
mod}~2)\right\},$$
$$\Gamma^0(2)=\left\{\left(\begin{array}{cc}
\ a & b  \\
 c  & d
\end{array}\right)\in SL_2({\bf Z})\mid b\equiv 0~({\rm
mod}~2)\right\},$$  be the two modular subgroups of $SL_2({\bf Z})$.
Writing $\theta_j=\theta_j(0,\tau),~1\leq
j\leq 3,$ we introduce four explicit modular forms (\cite{Li1}),
\begin{align}
&\delta_1(\tau)=\frac{1}{8}(\theta_2^4+\theta_3^4),~~\varepsilon_1(\tau)=\frac{1}{16}\theta_2^4\theta_3^4,\\\notag
&\delta_2(\tau)=-\frac{1}{8}(\theta_1^4+\theta_3^4),~~\varepsilon_2(\tau)=\frac{1}{16}\theta_1^4\theta_3^4.\notag
\end{align}
\noindent They have the following Fourier expansions in
$q^{\frac{1}{2}}$: \begin{align}
&\delta_1(\tau)=\frac{1}{4}+6q+6q^2+\cdots,~~\varepsilon_1(\tau)=\frac{1}{16}-q+7q^2+\cdots,\\\notag
&8\delta_2(\tau)=-1-24q^{\frac{1}{2}}-24q-96q^{\frac{3}{2}}+\cdots,~~\varepsilon_2(\tau)=q^{\frac{1}{2}}+8q+28q^{\frac{3}{2}}+\cdots.\notag
\end{align}
 They also satisfy the
transformation laws,
\begin{align}\delta_2(-\frac{1}{\tau})=\tau^2\delta_1(\tau),~~~~~~\varepsilon_2(-\frac{1}{\tau})
=\tau^4\varepsilon_1(\tau),
\end{align}
\begin{lem}(\cite{Li1})$\delta_1(\tau)$ (resp.
$\varepsilon_1(\tau)$) is a modular form of weight $2$ (resp. $4$)
over $\Gamma_0(2)$, $\delta_2(\tau)$ (resp. $\varepsilon_2(\tau)$)
is a modular form of weight $2$ (resp. $4$) over $\Gamma^0(2)$ and
moreover ${\mathcal{M}}_{{\bf R}}(\Gamma^0(2))={\bf
R}[\delta_2(\tau),\varepsilon_2(\tau)]$.
\end{lem}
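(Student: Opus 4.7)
The plan is to verify the modular invariance by checking on generators of each congruence subgroup, using the theta-function transformation laws (2.12)--(2.14), and then to establish the ring-theoretic statement by a dimension count against the $q^{1/2}$-expansions (2.46).

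First I would evaluate (2.12)--(2.14) at $v=0$ and raise to the fourth power. Under $T:\tau\mapsto\tau+1$ this yields $\theta_1^4\mapsto-\theta_1^4$, while $\theta_2^4$ and $\theta_3^4$ are interchanged; under $S:\tau\mapsto-1/\tau$ it yields $\theta_1^4(-1/\tau)=-\tau^2\theta_2^4(\tau)$, $\theta_2^4(-1/\tau)=-\tau^2\theta_1^4(\tau)$, and $\theta_3^4(-1/\tau)=-\tau^2\theta_3^4(\tau)$. Substituting into the definitions (2.45) shows immediately that $\delta_1,\varepsilon_1$ are $T$-invariant while $\delta_2,\varepsilon_2$ are $T^2$-invariant (the sign on $\theta_1^4$ is squared away), and simultaneously reproduces the identities (2.47). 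Since $\Gamma_0(2)$ is generated by $T$ together with $ST^2S^{-1}$, invariance of $\delta_1,\varepsilon_1$ under this second generator is equivalent, via (2.47), to the $T^2$-invariance of $\delta_2,\varepsilon_2$ already verified; the argument for $\Gamma^0(2)$ is symmetric under $S$-conjugation. Holomorphy at both cusps of each subgroup is transparent from (2.46).

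For the structure statement $\mathcal{M}_{\mathbb{R}}(\Gamma^0(2))=\mathbb{R}[\delta_2,\varepsilon_2]$, I would compare dimensions. The standard formula gives $\dim\mathcal{M}_{2k}(\Gamma^0(2))=\lfloor k/2\rfloor+1$ for $k\geq 0$, which coincides with the number of monomials $\delta_2^a\varepsilon_2^b$ of weight $2k$ (i.e.\ with $a+2b=k$). It then suffices to prove these monomials are linearly independent; this follows because their $q^{1/2}$-expansions begin at the distinct orders $q^{b/2}$ with nonzero leading coefficients, read off directly from (2.46).

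The theta-transformation computations are purely mechanical; the genuine input is the dimension formula for $\mathcal{M}_{2k}(\Gamma^0(2))$, which itself rests on the valence formula for that group and is not a routine calculation. Since the lemma is attributed to Liu \cite{Li1}, I would simply cite that reference for this final ingredient rather than rederive it from scratch.
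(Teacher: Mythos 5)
The paper offers no proof of this lemma at all: it is quoted from Liu \cite{Li1} and used as a black box, so there is no ``paper's proof'' to match against. Your sketch is essentially the standard argument (and the one underlying \cite{Li1}): the fourth-power theta transformation laws from (2.12)--(2.14) at $v=0$ do give $\theta_1^4\mapsto -\theta_1^4$, $\theta_2^4\leftrightarrow\theta_3^4$ under $T$ and $\theta_1^4\leftrightarrow -\tau^2\theta_2^4$, $\theta_3^4\mapsto-\tau^2\theta_3^4$ under $S$, which yields both the invariance claims and the relations (3.7), and since $\Gamma_0(2)$ is generated by $-I$, $T$ and $ST^{2}S^{-1}$ the conjugation trick via (3.7) closes the verification; the dimension count $\lfloor k/2\rfloor+1$ against the monomials $\delta_2^a\varepsilon_2^b$ with $a+2b=k$, plus linear independence from the distinct leading orders $q^{b/2}$ in (3.6), gives the ring statement. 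Two small points you gloss over: holomorphy at the cusp $0$ is not read off from (3.6) alone --- you need (3.7) to convert $\delta_1|_2S$ into $\delta_2$ (and vice versa) before its $q^{1/2}$-expansion applies; and to conclude that $\mathbf{R}[\delta_2,\varepsilon_2]$, which only produces even weights, exhausts $\mathcal{M}_{\mathbf{R}}(\Gamma^0(2))$ you should note that odd-weight forms vanish because $-I\in\Gamma^0(2)$. With those remarks, and with the dimension formula either cited or derived from the valence formula as you indicate, your argument is complete and does more than the paper, which simply defers to \cite{Li1}.
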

By (2.11)-(2.15) and (2.7), we have
\begin{lem}
${Q_1}(M,P_i,P_j,\tau)$ is a
modular form of weight $14$ over $\Gamma_0(2)$, while ${Q_2}(M,P_i,P_j,\tau)$ is
a modular form of weight $14$ over $\Gamma^0(2)$ . Moreover, the
following identity holds,
\begin{align}
{Q_1}(M,P_i,P_j,-\frac{1}{\tau})=2^{6}\tau^{14}{Q_2}(M,P_i,P_j,\tau).
\end{align}
\end{lem}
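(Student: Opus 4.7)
The plan is to first establish the functional identity (3.9) and then leverage it, together with direct translation invariance, to obtain modularity over $\Gamma_0(2)$ and $\Gamma^0(2)$.

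First I would start from the theta-function expression (3.3) for $Q_1$ and substitute $\tau \mapsto -1/\tau$ term by term, using (2.11)--(2.15). The manifold ratio $\frac{x_j\theta'(0,\tau)\theta_1(x_j,\tau)}{\theta(x_j,\tau)\theta_1(0,\tau)}$ transforms cleanly because the Gaussians $e^{\pi\sqrt{-1}\tau x_j^2}$ appearing in $\theta(x_j,-1/\tau)$ and $\theta_1(x_j,-1/\tau)$ cancel in the ratio, and the factors $(\tau/\sqrt{-1})^{1/2}$ also balance; the net effect on each ratio is to convert $\theta_1$ into $\theta_2$ and to substitute $\tau x_j$ for $x_j$ inside all four theta functions, with no extra explicit $\tau$-factor left over. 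For the $E_8$ sums $\frac14\bigl[\prod_l\theta_1(y_l^i,\tau)+\prod_l\theta_2(y_l^i,\tau)+\prod_l\theta_3(y_l^i,\tau)\bigr]$, the $S$-transformation acts by the permutation $\theta_1\leftrightarrow\theta_2$, $\theta_3\mapsto\theta_3$ of the three summands, which fixes the total sum, while each theta contributes $(\tau/\sqrt{-1})^{1/2}e^{\pi\sqrt{-1}\tau(y_l^i)^2}$ and sends $y_l^i\mapsto\tau y_l^i$. The crucial observation is that the residual exponential $\exp\bigl(\pi\sqrt{-1}\tau\sum_l[(y_l^i)^2+(y_l^j)^2]\bigr)$ is exactly cancelled by the anomalous piece of $E_2$: by (2.7), $E_2(-1/\tau)=\tau^2E_2(\tau)-6\sqrt{-1}\tau/\pi$, and by (2.6), $A_2=4\pi^2\sum_l[(y_l^i)^2+(y_l^j)^2]$, so the extra $\exp(-\sqrt{-1}\tau A_2/(4\pi))$ produced from the anomalous term inside $e^{E_2(\tau)A_2/24}$ absorbs the previous exponential.

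Next I would collect the explicit $\tau$-weights. The six manifold ratios contribute no net explicit $\tau$ (those are absorbed into the rescaled arguments $\tau x_j$), and the two $E_8$ blocks each contribute $(\tau/\sqrt{-1})^{4}=\tau^4$, for a total of $\tau^8$. After these rearrangements the integrand is precisely the $Q_2$-integrand with every $x_j,y_l^i,y_l^j$ replaced by $\tau x_j,\tau y_l^i,\tau y_l^j$; the prefactor $\exp((\tau^2/24)E_2(\tau)A_2)$ matches the $Q_2$-prefactor under the same substitution, since $A_2$ is polynomial degree $2$ in the $y$'s and hence scales as $\tau^2$. Extracting the degree-$12$ component supplies an additional $\tau^6$ from the polynomial degree $6$ monomials, and altogether $Q_1(-1/\tau)=2^6\tau^{14}Q_2(\tau)$, which is (3.9).

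To conclude modularity, I would verify the translation invariances. Under $\tau\mapsto\tau+1$, all phases $e^{\pi\sqrt{-1}/4}$ from $\theta,\theta_1,\theta'$ cancel within each manifold ratio in $Q_1$; the swap $\theta_2\leftrightarrow\theta_3$ preserves the symmetric $E_8$ sum; and $\prod_l\theta_1(y_l^i,\tau+1)=e^{8\pi\sqrt{-1}/4}\prod_l\theta_1(y_l^i,\tau)=\prod_l\theta_1(y_l^i,\tau)$, hence $Q_1(\tau+1)=Q_1(\tau)$. For $Q_2$, the same check shows that $T$ exchanges $\theta_2$ and $\theta_3$ in the manifold ratio so that only $T^2$ is a symmetry. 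Using the standard presentations $\Gamma_0(2)=\langle T,ST^{-2}S^{-1}\rangle$ and $\Gamma^0(2)=\langle T^2,STS^{-1}\rangle$, the desired weight-$14$ transformation under the non-translation generators then follows from (3.9) applied twice, inserting $T^2$-invariance of $Q_2$ (for $\Gamma_0(2)$) or $T$-invariance of $Q_1$ (for $\Gamma^0(2)$) in between; a short calculation yields $Q_1(\tau/(2\tau+1))=(2\tau+1)^{14}Q_1(\tau)$, and analogously for $Q_2$.

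The main obstacle is the second stage of step one: tracking the interplay between the non-modular anomaly of $E_2$ and the Gaussian factors from theta transformations, and correctly accounting for all $\tau$-powers coming from the explicit theta-ratio weights, the scaling $x\mapsto\tau x,\,y\mapsto\tau y$ of the arguments, and the dimensional degree extraction. The numerical coincidence that the coefficient $-6\sqrt{-1}/\pi$ in $E_2(-1/\tau)$ combines with $1/24$ from the prefactor and with $4\pi^2$ from (2.6) to exactly cancel the $\pi\sqrt{-1}$ arising in the Gaussians is precisely what forces the definition of $A_2$ in the chosen normalization.
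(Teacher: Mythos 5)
Your proof is correct and follows exactly the route the paper intends: Lemma 3.2 is stated there with only the citation ``By (2.11)--(2.15) and (2.7)'', and your argument is the standard working-out of that citation (theta transformation laws applied to (3.3)--(3.4), cancellation of the Gaussian factors against the anomalous term of $E_2$ via $A_2=4\pi^2\sum_l[(y_l^i)^2+(y_l^j)^2]$, and bookkeeping of the $\tau$-powers $\tau^8\cdot\tau^6=\tau^{14}$). The details you supply, including the generator argument for $\Gamma_0(2)$ and $\Gamma^0(2)$, all check out.
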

\begin{thm} We have the following equality:
\begin{align}
&32\left\{e^{\frac{1}{24}A_2}\widehat{L}(TM)\right\}^{(12)}=
\left\{e^{\frac{1}{24}A_2}\widehat{A}(TM){\rm ch}[2240+309\widetilde{T_CM}+24(\wedge^2\widetilde{T_CM}+W_i+W_j)\right.\\\notag
&\left.+\widetilde{T_CM}\otimes \widetilde{T_CM}+
\widetilde{T_CM}\otimes(W_i+W_j)+\wedge^3\widetilde{T_CM}]-
\frac{1}{24}e^{\frac{1}{24}A_2}A_2\widehat{A}(TX){\rm ch}(576+24\widetilde{T_CM})\right\}^{(12)}.
\notag
\end{align}
\end{thm}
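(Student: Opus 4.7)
The approach is a twist-by-$E_8$-bundle analogue of Liu's higher-dimensional miraculous cancellation argument. By Lemma 3.1 together with Lemma 3.2, $Q_2(M,P_i,P_j,\tau)$ is a modular form of weight $14$ over $\Gamma^0(2)$ with values in $\Omega^{12}(M)$, so there exist characteristic forms $h_0,h_1,h_2,h_3$ on $M$, independent of $\tau$, such that
\begin{equation*}
Q_2(M,P_i,P_j,\tau) = h_0\delta_2^{7} + h_1\delta_2^{5}\varepsilon_2 + h_2\delta_2^{3}\varepsilon_2^{2} + h_3\delta_2\varepsilon_2^{3}.
\end{equation*}
The first task is to pin down the $h_k$. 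By (3.6) the monomial $\delta_2^{7-2k}\varepsilon_2^{k}$ has $q^{1/2}$-valuation exactly $k/2$ with leading coefficient $(-1/8)^{7-2k}$, so matching the $q^{0},q^{1/2},q^{1},q^{3/2}$-coefficients of the two expressions for $Q_2$ produces a triangular $4\times 4$ linear system whose unique solution recovers each $h_k$.

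The direct $q^{1/2}$-expansion of $Q_2$ is assembled from $e^{E_2(\tau)A_2/24} = e^{A_2/24}(1 - A_2 q + O(q^{2}))$, $\bigotimes_{n\ge 1}S_{q^n}(\widetilde{T_CM}) = 1 + q\,\widetilde{T_CM} + O(q^{2})$,
\begin{equation*}
{\rm ch}\Bigl[\bigotimes_{m\ge 1}\wedge_{-q^{m-1/2}}(\widetilde{T_CM})\Bigr] = 1 - q^{1/2}{\rm ch}(\widetilde{T_CM}) + q\,{\rm ch}(\wedge^{2}\widetilde{T_CM}) - q^{3/2}{\rm ch}(\wedge^{3}\widetilde{T_CM} + \widetilde{T_CM}) + O(q^{2}),
\end{equation*}
and $\varphi(\tau)^{16}{\rm ch}(\mathcal{V}_i){\rm ch}(\mathcal{V}_j) = 1 + q({\rm ch}(W_i) + {\rm ch}(W_j) - 16) + O(q^{2})$. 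Multiplying through by $\widehat{A}(TM)$ gives the first four $q^{n/2}$-coefficients of $Q_2$ explicitly as characteristic forms built from $\widehat{A}(TM)$, $A_2$, ${\rm ch}(\widetilde{T_CM})$, ${\rm ch}(\wedge^{2}\widetilde{T_CM})$, ${\rm ch}(\wedge^{3}\widetilde{T_CM})$, ${\rm ch}(W_i)$ and ${\rm ch}(W_j)$, and inverting the triangular system then produces each $h_k$.

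Next I would transfer this information to the cusp $i\infty$ of $Q_1$. The functional equation (3.7) combined with $\delta_2(-1/\tau) = \tau^{2}\delta_1(\tau)$ and $\varepsilon_2(-1/\tau) = \tau^{4}\varepsilon_1(\tau)$ yields
\begin{equation*}
Q_1(M,P_i,P_j,\tau) = 2^{6}\bigl(h_0\delta_1^{7} + h_1\delta_1^{5}\varepsilon_1 + h_2\delta_1^{3}\varepsilon_1^{2} + h_3\delta_1\varepsilon_1^{3}\bigr).
\end{equation*}
At $q=0$ one has $\delta_1 = \tfrac{1}{4}$ and $\varepsilon_1 = \tfrac{1}{16}$, so every monomial $\delta_1^{7-2k}\varepsilon_1^{k}$ evaluates to the common value $2^{-14}$; therefore $Q_1|_{q=0} = 2^{-8}(h_0 + h_1 + h_2 + h_3)$. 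Independently, a direct evaluation of (3.3) at $q=0$, in which $\tfrac{x_j\theta'(0,\tau)\theta_1(x_j,\tau)}{\theta(x_j,\tau)\theta_1(0,\tau)} \to \pi x_j \cot(\pi x_j)$ (one half of the $\widehat{L}$-eigenvalue $2\pi x_j\cot(\pi x_j)$) and each of the two theta sums becomes $2$, gives $Q_1|_{q=0} = \{e^{A_2/24}\widehat{L}(TM)\}^{(12)}$. Equating yields $\tfrac{1}{8}(h_0 + h_1 + h_2 + h_3) = 32\{e^{A_2/24}\widehat{L}(TM)\}^{(12)}$, and substituting the explicit formulas for the $h_k$ then delivers the right-hand side of the theorem.

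The main obstacle is the bookkeeping in the first part: one must track four half-integer $q$-powers through $e^{E_2 A_2/24}$, the exterior and symmetric power expansions, the $\varphi^{16}{\rm ch}(\mathcal{V}_i){\rm ch}(\mathcal{V}_j)$ factor and the $\delta_2,\varepsilon_2$ expansions, and then verify that the linear combination $\tfrac{1}{8}(h_0+h_1+h_2+h_3)$ collapses to precisely the stated characteristic-form expression, producing the specific coefficients $2240$, $309$, $24$ and $576$; once the $h_k$ are in hand, the remaining assembly is routine.
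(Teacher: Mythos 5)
Your proposal is correct and follows essentially the same route as the paper: expand $Q_2$ in the basis $\delta_2^{7-2k}\varepsilon_2^{k}$ of weight-$14$ forms over $\Gamma^0(2)$, solve the triangular system coming from the $q^0,q^{1/2},q,q^{3/2}$ coefficients for $h_0,\dots,h_3$, transfer to $Q_1$ via the functional equation and the transformation laws of $\delta_2,\varepsilon_2$, and compare constant terms at $q=0$ (your normalization of the $h_k$ differs from the paper's by powers of $8$, but the final identity is the same). The only work you defer — the explicit bookkeeping producing the coefficients $2240$, $309$, $24$, $576$ — is exactly the computation recorded in the paper's equations (3.12)--(3.13).
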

\begin{proof}
By Lemmas 3.1 and 3.2, we have
\begin{align}
&{Q_2}(M,P_i,P_j,\tau)=h_0(8\delta_2)^{7}+h_1(8\delta_2)^{5}\varepsilon_2+h_2(8\delta_2)^{3}\varepsilon_2^2+h_3(8\delta_2)\varepsilon_2^3,
\end{align}
\begin{align}
&{Q_1}(M,P_i,P_j,\tau)=2^6[h_0(8\delta_1)^{7}+h_1(8\delta_1)^{5}\varepsilon_1+h_2(8\delta_1)^{3}\varepsilon_1^2+h_3(8\delta_1)\varepsilon_1^3],
\end{align}
where
each $h_r,~ 0\leq r\leq 3,$ is a real multiple of the
volume form at $x$.
\begin{align}
&e^{\frac{1}{24}E_2(\tau)A_2}\widehat{A}(TM,\nabla^{TM}){\rm ch}\left[\bigotimes _{n=1}^{\infty}S_{q^n}(\widetilde{T_CM})
\otimes
\bigotimes _{m=1}^{\infty}\wedge_{-q^{m-\frac{1}{2}}}(\widetilde{T_CM})\right]\cdot\varphi(\tau)^{16}{\rm ch}(\mathcal{V}_i){\rm ch}(\mathcal{V}_j)\\\notag
&=(e^{\frac{1}{24}A_2}-e^{\frac{1}{24}A_2}A_2q+O(q^2))\widehat{A}(TM)
{\rm ch}\left[(1+q\widetilde{T_CM}+O(q^2))\right.\\\notag
&\left.\otimes (1-q^{\frac{1}{2}}\widetilde{T_CM}+q\wedge^2\widetilde{T_CM}-q^{\frac{3}{2}}
\wedge^3\widetilde{T_CM}+O(q^2))\otimes (1-q^{\frac{3}{2}}
\widetilde{T_CM})\right]\\\notag
&\cdot(1-16q+O(q^2))(1+{\rm ch}({W_i})q+O(q^2))(1+{\rm ch}({W_j})q+O(q^2))\\\notag
&=e^{\frac{1}{24}A_2}\widehat{A}(TM)-q^{\frac{1}{2}}e^{\frac{1}{24}A_2}\widehat{A}(TX){\rm ch}(\widetilde{T_CM})\\\notag
&+q[e^{\frac{1}{24}A_2}\widehat{A}(TM){\rm ch}(\widetilde{T_CM}+\wedge^2\widetilde{T_CM}+W_i+W_j-16)-e^{\frac{1}{24}A_2}A_2\widehat{A}(TM)]\\\notag
&+q^{\frac{3}{2}}\left[-e^{\frac{1}{24}A_2}\widehat{A}(TX)
{\rm ch}(\widetilde{T_CM}\otimes \widetilde{T_CM}+\widetilde{T_CM}\otimes(W_i+W_j-16)+\widetilde{T_CM}+\wedge^3\widetilde{T_CM})\right.\\\notag
&\left.+e^{\frac{1}{24}A_2}A_2\widehat{A}(TM){\rm ch}(\widetilde{T_CM})\right]+
O(q^2).\notag
\end{align}
By (3.10), (3.12) and (3.6), comparing the the constant term, $q^{\frac{1}{2}},q, q^{\frac{3}{2}}$ terms in (3.10), we get
\begin{align}
&h_0=-\left\{e^{\frac{1}{24}A_2}\widehat{A}(TM)\right\}^{(12)}\\\notag
&h_1=\left\{e^{\frac{1}{24}A_2}\widehat{A}(TM){\rm ch}(\widetilde{T_CM}+168)\right\}^{(12)}\\\notag
&h_2=\left\{e^{\frac{1}{24}A_2}\widehat{A}(TM){\rm ch}(-9224-129\widetilde{T_CM}-\wedge^2\widetilde{T_CM}-W_i-W_j)
+e^{\frac{1}{24}A_2}A_2\widehat{A}(TM)\right\}^{(12)}\\\notag
&h_3=\left\{e^{\frac{1}{24}A_2}\widehat{A}(TM){\rm ch}(\widetilde{T_CM}\otimes \widetilde{T_CM}+\widetilde{T_CM}\otimes(W_i+W_j-16)
+\widetilde{T_CM}+\wedge^3\widetilde{T_CM}\right.\\\notag
&+508704-6868(\widetilde{T_CM}+168)
+88\times 9224+88\times 129\widetilde{T_CM}\\\notag
&\left.+88\wedge^2\widetilde{T_CM}+88W_i+88W_j)
-e^{\frac{1}{24}A_2}A_2\widehat{A}(TM){\rm ch}(\widetilde{T_CM}+88)\right\}^{(12)}.\notag
\notag
\end{align}
By (3.13), (3.11) and (3.6),  comparing the the constant term in (3.11), we get Theorem 3.3.
\end{proof}
\begin{cor} We have the following equality when $A_2=0$:
\begin{align}
&32\left\{\widehat{L}(TM)\right\}^{(12)}=
\left\{\widehat{A}(TM){\rm ch}[2240+309\widetilde{T_CM}+24(\wedge^2\widetilde{T_CM}+W_i+W_j)\right.\\\notag
&\left.+\widetilde{T_CM}\otimes \widetilde{T_CM}+
\widetilde{T_CM}\otimes(W_i+W_j)+\wedge^3\widetilde{T_CM}]\right\}^{(12)}.
\notag
\end{align}
When $M$ is spin, then the characteristic number of the right hand of (3.14) is a  multiple of $32$.
\end{cor}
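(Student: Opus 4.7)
The plan is to specialize Theorem 3.3 to the degenerate case $A_2 = 0$, and then read off the divisibility claim from integrality of the signature of $M$.

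First I would observe that setting $A_2 = 0$ trivializes both auxiliary factors on the right-hand side of the identity in Theorem 3.3: the exponential $e^{\frac{1}{24}A_2}$ collapses to $1$ as a cohomology class, and the entire subtracted term
$$\frac{1}{24}e^{\frac{1}{24}A_2}A_2\widehat{A}(TX){\rm ch}(576+24\widetilde{T_CM})$$
vanishes identically because of its explicit $A_2$ factor. What remains on both sides is exactly the identity (3.14) of the corollary, so the algebraic content reduces to this one-line specialization of Theorem 3.3.

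For the divisibility claim, I would pair both sides of (3.14) with the fundamental class $[M]$. The Hirzebruch signature theorem identifies $\{\widehat{L}(TM)\}^{(12)}[M]$ with (a fixed integer multiple of) the signature ${\rm Sig}(M)$, which is an integer for any closed oriented $12$-manifold. Consequently the left-hand side is $32$ times an integer, hence a multiple of $32$, and by (3.14) so is the right-hand characteristic number. The spin hypothesis enters here because it gives each summand on the right an independent interpretation as an index: with $M$ spin the Dirac operator $D$ exists, and expressions of the form $\{\widehat{A}(TM)\,{\rm ch}(E)\}^{(12)}[M] = {\rm Ind}(D\otimes E)$ are integers whenever $E$ is a genuine virtual complex bundle, such as $\widetilde{T_CM}$, $\wedge^2\widetilde{T_CM}$, $\wedge^3\widetilde{T_CM}$, $\widetilde{T_CM}\otimes\widetilde{T_CM}$, $W_i$, $W_j$, or $\widetilde{T_CM}\otimes(W_i+W_j)$; this confirms independently that each block on the right is an integer, and the overall sum is the $32\cdot{\rm Sig}(M)$ forced by (3.14).

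There is essentially no obstacle here: once Theorem 3.3 is available, the identity is a direct substitution, and the divisibility is a one-line consequence of the integrality of the signature of a $12$-dimensional oriented manifold. The only detail worth double-checking is the normalization in the definition of $\widehat{L}$, so that $\{\widehat{L}(TM)\}^{(12)}[M]$ is literally an integer and not a rational multiple of one; any hidden rational prefactor would only adjust the divisibility constant, not eliminate the conclusion.
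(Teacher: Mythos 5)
Your proposal is correct and follows the same route the paper intends: the identity is the direct specialization $A_2=0$ of Theorem 3.3, and the divisibility comes from the left side being $32$ times an integer. For the normalization worry you raise, note that with the paper's definition $\widehat{L}(TM)=\prod_j x_j/\tanh(x_j/2)$ one has $\{\widehat{L}(TM)\}^{(12)}[M]=L_3(p_1,p_2,p_3)[M]={\rm Sig}(M)$ exactly (the factor $2^6$ from rescaling $x_j\mapsto x_j/2$ cancels the $4^{-3}$ from the weighted homogeneity of $L_3$), so no rational prefactor appears and the conclusion stands as stated.
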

Let
$A_3:=\frac{1}{30}c_2(W_i).$
Let \begin{align}
{Q_1}(M,P_i,\tau)=&\left\{e^{\frac{1}{24}E_2(\tau)A_3}\widehat{L}(TM,\nabla^{ TM}){\rm ch}\left[\bigotimes _{n=1}^{\infty}S_{q^n}(\widetilde{T_CM})
\otimes
\bigotimes _{m=1}^{\infty}\wedge_{q^m}(\widetilde{T_CM})\right]\right.\\\notag
&\left.\cdot\varphi(\tau)^{8}{\rm ch}(\mathcal{V}_i)\right\}^{(12)},
\end{align}
\begin{align}
{Q_2}(M,P_i,\tau)=&\left\{e^{\frac{1}{24}E_2(\tau)A_3}\widehat{A}(TM,\nabla^{ TM}){\rm ch}\left[\bigotimes _{n=1}^{\infty}S_{q^n}(\widetilde{T_CM})
\otimes
\bigotimes _{m=1}^{\infty}\wedge_{-q^{m-\frac{1}{2}}}(\widetilde{T_CM})\right]\right.\\\notag
&\left.\cdot\varphi(\tau)^{8}{\rm ch}(\mathcal{V}_i)\right\}^{(12)}.
\end{align}
Then
\begin{align}&Q_1(M,P_i,\tau)=2^6\left\{e^{\frac{1}{24}E_2(\tau)A_3}\left(\prod_{j=1}^{6}\frac{x_j\theta'(0,\tau)\theta_1(x_j,\tau)}
{\theta(x_j,\tau)\theta_1(0,\tau)}\right)
\right.\\\notag
&\left.\frac{1}{2}\left(\prod_{l=1}^8\theta_1(y_l^i,\tau)+\prod_{l=1}^8\theta_2(y_l^i,\tau)+\prod_{l=1}^8\theta_3(y_l^i,\tau)\right)
\right\}^{(12)},
\end{align}
and
\begin{align}&Q_2(M,P_i,\tau)=\left\{e^{\frac{1}{24}E_2(\tau)A_3}\left(\prod_{j=1}^{6}\frac{x_j\theta'(0,\tau)\theta_2(x_j,\tau)}
{\theta(x_j,\tau)\theta_2(0,\tau)}\right)
\right.\\\notag
&\left.\frac{1}{2}\left(\prod_{l=1}^8\theta_1(y_l^i,\tau)+\prod_{l=1}^8\theta_2(y_l^i,\tau)+\prod_{l=1}^8\theta_3(y_l^i,\tau)\right)
\right\}^{(12)}.
\end{align}
We have
\begin{lem}
${Q_1}(M,P_i,\tau)$ is a
modular form of weight $10$ over $\Gamma_0(2)$, while ${Q_2}(M,P_i,\tau)$ is
a modular form of weight $10$ over $\Gamma^0(2)$ . Moreover, the
following identity holds,
\begin{align}
{Q_1}(M,P_i,-\frac{1}{\tau})=2^{6}\tau^{10}{Q_2}(M,P_i,\tau).
\end{align}
\end{lem}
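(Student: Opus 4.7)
The plan is to follow the strategy of Lemma 3.2, adapted to the simpler setting where only one $E_8$ bundle appears and $\varphi(\tau)^{16}$ is replaced by $\varphi(\tau)^{8}$. Three properties must be verified: invariance of $Q_1(M,P_i,\tau)$ under $T:\tau\mapsto\tau+1$, invariance of $Q_2(M,P_i,\tau)$ under $T^2:\tau\mapsto\tau+2$, and the $S$-transformation identity (3.19). Once these are in place, the fact that $\Gamma_0(2)$ is generated by $T$ and $ST^{-2}S^{-1}$ (and $\Gamma^0(2)$ by $T^2$ and a conjugate analogue) turns the two $T$-type invariances together with (3.19) into full modular invariance over the respective groups, with the weight read off directly from the $S$-relation.

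The $T$- and $T^2$-invariances are routine from (2.7) and (2.11)--(2.15): the phases $e^{\pi\sqrt{-1}/4}$ picked up in numerators are balanced by the same phases in denominators, the swap $\theta_2\leftrightarrow\theta_3$ merely permutes two of the three $\theta_k$-products and leaves the sum $\prod \theta_1+\prod \theta_2+\prod \theta_3$ unchanged, and $E_2(\tau+1)=E_2(\tau)$.

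The $S$-transformation is the core computation. Applying (2.11) and (2.12) to each factor, one finds that under $\tau\mapsto -1/\tau$, the product $\frac{x_j\,\theta'(0,\tau)\theta_1(x_j,\tau)}{\theta(x_j,\tau)\theta_1(0,\tau)}$ becomes $\frac{(\tau x_j)\,\theta'(0,\tau)\theta_2(\tau x_j,\tau)}{\theta(\tau x_j,\tau)\theta_2(0,\tau)}$, the exponentials $e^{\pi\sqrt{-1}\tau x_j^2}$ from numerator and denominator cancelling inside this ratio. Applying (2.13)--(2.15) to the $E_8$ block gives an overall factor $\tau^4\,e^{\pi\sqrt{-1}\tau\sum_l(y_l^i)^2}$, with $\theta_1$ and $\theta_2$ swapped so that the sum is preserved. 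Meanwhile (2.7) yields $e^{\frac{1}{24}E_2(-1/\tau)A_3}=e^{\frac{1}{24}\tau^2 E_2(\tau)A_3}\cdot e^{-\frac{\sqrt{-1}\tau A_3}{4\pi}}$. The decisive point, which is exactly why the $E_2$ factor was inserted in the first place, is that (2.6) gives $\sum_l(y_l^i)^2=\frac{A_3}{4\pi^2}$, so the exponential $e^{\pi\sqrt{-1}\tau\sum_l(y_l^i)^2}=e^{\frac{\sqrt{-1}\tau A_3}{4\pi}}$ kills the $E_2$ anomaly exactly.

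What remains inside $\{\cdot\}^{(12)}$ is the integrand of $Q_2(M,P_i,\tau)$ evaluated at the rescaled variables $\tau x_j,\tau y_l^i,\tau^2 A_3$. Since $x_j,y_l^i$ are formal $2$-forms and $A_3$ is a $4$-form, selecting the degree-$12$ component rescales everything by $\tau^{6}$. Combined with the overall $\tau^4$ from the $E_8$ block and the prefactor $2^6$, one obtains $Q_1(M,P_i,-1/\tau)=2^6\tau^{10}Q_2(M,P_i,\tau)$, which simultaneously yields (3.19) and identifies the weight as $10$. The main obstacle, as in Lemma 3.2, is the exponential bookkeeping in the $S$-transformation, which rests on the numerical match between (2.6), (2.7) and the definition of $A_3$; everything else is mechanical substitution.
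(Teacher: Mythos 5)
Your proposal is correct and follows exactly the computation the paper is implicitly invoking (the paper states Lemma 3.5 without proof, and proves the analogous Lemma 3.2 only by citing (2.7) and (2.11)--(2.15)): $T$/$T^2$-invariance from the phase cancellations and the $\theta_2\leftrightarrow\theta_3$ swap, and the $S$-transformation with the $e^{\pi\sqrt{-1}\tau\sum_l(y_l^i)^2}$ factor from the $E_8$ block cancelling the anomalous term of $E_2$ via $\sum_l(y_l^i)^2=A_3/(4\pi^2)$, followed by the degree-$12$ rescaling $\tau^6$ and the $\tau^4$ from the eight theta factors to give weight $10$. The bookkeeping checks out, so nothing further is needed.
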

\begin{thm} We have the following equality:
\begin{align}
&\left\{e^{\frac{1}{24}A_3}\widehat{L}(TM)\right\}^{(12)}=
-\frac{1}{2}\left\{e^{\frac{1}{24}A_3}\widehat{A}(TM){\rm ch}[17\widetilde{T_CM}+\wedge^2\widetilde{T_CM}+W_i+128]\right\}^{(12)}\\\notag
&+\frac{1}{60}\left\{c_2(W_i)
e^{\frac{1}{24}A_3}\widehat{A}(TX)\right\}^{(12)}.
\notag
\end{align}
\end{thm}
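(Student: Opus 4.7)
My plan is to mirror the proof of Theorem 3.3 at weight $10$ with a single $E_8$ factor. By Lemma 3.6, $Q_2(M,P_i,\tau)$ is a modular form of weight $10$ over $\Gamma^0(2)$. Since ${\mathcal{M}}_{{\bf R}}(\Gamma^0(2))={\bf R}[\delta_2,\varepsilon_2]$ with $\deg\delta_2=2$ and $\deg\varepsilon_2=4$, one has a unique expansion
\begin{equation*}
Q_2(M,P_i,\tau) = h_0(8\delta_2)^{5} + h_1(8\delta_2)^{3}\varepsilon_2 + h_2(8\delta_2)\varepsilon_2^{2},
\end{equation*}
where $h_0,h_1,h_2$ are real multiples of the volume form on $M$. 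The transformation law (3.19), combined with (3.7), propagates this to
\begin{equation*}
Q_1(M,P_i,\tau) = 2^{6}\bigl[h_0(8\delta_1)^{5} + h_1(8\delta_1)^{3}\varepsilon_1 + h_2(8\delta_1)\varepsilon_1^{2}\bigr].
\end{equation*}

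To pin down the $h_r$, I would expand $Q_2(M,P_i,\tau)$ in $q^{1/2}$ by the same bookkeeping as in (3.12), now using $\varphi(\tau)^{8}=1-8q+O(q^{2})$ and the single factor ${\rm ch}(\mathcal{V}_i)=1+{\rm ch}(W_i)q+O(q^{2})$. This yields the constant, $q^{1/2}$, and $q$ coefficients
\begin{equation*}
\{e^{\frac{1}{24}A_3}\widehat{A}(TM)\}^{(12)},\qquad \{-e^{\frac{1}{24}A_3}\widehat{A}(TM){\rm ch}(\widetilde{T_CM})\}^{(12)},
\end{equation*}
and
\begin{equation*}
\{e^{\frac{1}{24}A_3}\widehat{A}(TM){\rm ch}(\widetilde{T_CM}+\wedge^{2}\widetilde{T_CM}+W_i-8) - e^{\frac{1}{24}A_3}A_3\widehat{A}(TM)\}^{(12)}.
\end{equation*}
On the modular side, expanding the three basis monomials through order $q$ via (3.6) produces a lower-triangular $3\times 3$ linear system (the $q^{0}$, $q^{1/2}$, $q$ entries of $(8\delta_2)^{5}$, $(8\delta_2)^{3}\varepsilon_2$, $(8\delta_2)\varepsilon_2^{2}$ form a triangular matrix with $-1$ on the diagonal), which inverts to give $h_0, h_1, h_2$ uniquely as explicit characteristic forms.

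With $h_0,h_1,h_2$ in hand, I would evaluate the modular expansion of $Q_1$ at $q=0$. The left-hand side collapses to $\{e^{\frac{1}{24}A_3}\widehat{L}(TM)\}^{(12)}$, while $8\delta_1|_{q=0}=2$ and $\varepsilon_1|_{q=0}=1/16$ yield the closed form $2^{6}\bigl[32h_0+\tfrac{1}{2}h_1+\tfrac{1}{128}h_2\bigr]=2048\,h_0+32\,h_1+\tfrac{1}{2}h_2$. Substituting the solved $h_r$ and using $A_3=c_{2}(W_i)/30$ to convert the residual $A_3$-term should produce exactly the stated identity, with the scalar, $\widetilde{T_CM}$, $\wedge^{2}\widetilde{T_CM}$, and $W_i$ contributions summing to the bracketed factor $128+17\widetilde{T_CM}+\wedge^{2}\widetilde{T_CM}+W_i$ (multiplied by $-1/2$) and the $A_3$ contribution becoming $\tfrac{1}{60}c_{2}(W_i)$.

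The main obstacle is arithmetic rather than conceptual: one must track the first three Fourier coefficients of $(8\delta_2)^{5}$, $(8\delta_2)^{3}\varepsilon_2$, and $(8\delta_2)\varepsilon_2^{2}$ precisely, and then carry the weighting of each $h_r$ into $Q_1|_{q=0}$ so that the scalar $128$, the coefficient $17$ of ${\rm ch}(\widetilde{T_CM})$, the coefficients $1$ of ${\rm ch}(\wedge^{2}\widetilde{T_CM})$ and ${\rm ch}(W_i)$, and the prefactor $\tfrac{1}{60}$ of $c_{2}(W_i)$ all land exactly right. There is no conceptual novelty beyond the strategy already developed for Theorem 3.3.
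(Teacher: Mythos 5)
Your proposal follows the paper's proof essentially verbatim: decompose $Q_2(M,P_i,\tau)$ as $h_0'(8\delta_2)^5+h_1'(8\delta_2)^3\varepsilon_2+h_2'(8\delta_2)\varepsilon_2^2$, transfer to $Q_1$ via the transformation law, solve the lower-triangular system from the $q^0$, $q^{1/2}$, $q$ coefficients of the $q$-expansion (3.23), and compare constant terms of $Q_1$ at $q=0$; the numerical evaluation $2^6[32h_0+\tfrac12 h_1+\tfrac1{128}h_2]$ is also what the paper does implicitly. The only blemish is the citation of "Lemma 3.6" where the relevant modularity statement is Lemma 3.5.
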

\begin{proof}

By Lemmas 3.1 and 3.5, we have
\begin{align}
&{Q_2}(M,P_i,\tau)=h_0'(8\delta_2)^{5}+h_1'(8\delta_2)^{3}\varepsilon_2+h_2'(8\delta_2)\varepsilon_2^2,
\end{align}
\begin{align}
&{Q_1}(M,P_i,\tau)=2^6[h_0'(8\delta_1)^{5}+h_1'(8\delta_1)^{3}\varepsilon_1+h_2'(8\delta_1)\varepsilon_1^2],
\end{align}
where
each $h_r',~ 0\leq r\leq 2,$ is a real multiple of the
volume form at $x$.
\begin{align}
&e^{\frac{1}{24}E_2(\tau)A_3}\widehat{A}(TM,\nabla^{TM}){\rm ch}\left[\bigotimes _{n=1}^{\infty}S_{q^n}(\widetilde{T_CM})
\otimes
\bigotimes _{m=1}^{\infty}\wedge_{-q^{m-\frac{1}{2}}}(\widetilde{T_CM})\right]\cdot\varphi(\tau)^{8}{\rm ch}(\mathcal{V}_i)\\\notag
&=(e^{\frac{1}{24}A_3}-e^{\frac{1}{24}A_3}A_3q+O(q^2))\widehat{A}(TM)
{\rm ch}\left[(1+q\widetilde{T_CM}+O(q^2))\right.\\\notag
&\otimes (1-q^{\frac{1}{2}}\widetilde{T_CM}+q\wedge^2\widetilde{T_CM}+O(q^{\frac{3}{2}}))
\\\notag
&\cdot(1-8q+O(q^2))(1+{\rm ch}({W_i})q+O(q^2))\\\notag
&=e^{\frac{1}{24}A_3}\widehat{A}(TM)-q^{\frac{1}{2}}e^{\frac{1}{24}A_3}\widehat{A}(TM){\rm ch}(\widetilde{T_CM})\\\notag
&+q[e^{\frac{1}{24}A_3}\widehat{A}(TM){\rm ch}(\widetilde{T_CM}+\wedge^2\widetilde{T_CM}+W_i-8)-e^{\frac{1}{24}A_3}A_3\widehat{A}(TM)]
+O(q^{\frac{3}{2}})\notag
\end{align}
By (3.21), (3.23) and (3.6), comparing the the constant term, $q^{\frac{1}{2}},q$ terms in (3.21), we get
\begin{align}
&h'_0=-\left\{e^{\frac{1}{24}A_3}\widehat{A}(TM)\right\}^{(12)}\\\notag
&h'_1=\left\{e^{\frac{1}{24}A_3}\widehat{A}(TM){\rm ch}(\widetilde{T_CM}+120)\right\}^{(12)}\\\notag
&h'_2=\left\{-e^{\frac{1}{24}A_3}\widehat{A}(TM){\rm ch}(3712+81\widetilde{T_CM}+\wedge^2\widetilde{T_CM}+W_i)
+e^{\frac{1}{24}A_3}A_3\widehat{A}(TM)\right\}^{(12)}
\notag
\end{align}
By (3.22), (3.24) and (3.6),  comparing the the constant term in (3.22), we get Theorem 3.6.
\end{proof}

\begin{cor} We have the following equality when $c_2(W_i)=0$:
\begin{align}
&-2\left\{\widehat{L}(TM)\right\}^{(12)}=
\left\{\widehat{A}(TM){\rm ch}[17\widetilde{T_CM}+\wedge^2\widetilde{T_CM}+W_i+128]\right\}^{(12)}.
\notag
\end{align}
When $M$ is a spin manifold, then ${\rm Ind}(D\otimes(17\widetilde{T_CM}+\wedge^2\widetilde{T_CM}+W_i+128)_+)$ is a multiple of $2$.
\end{cor}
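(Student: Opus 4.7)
The plan is to obtain Corollary 3.7 as a direct specialization of Theorem 3.6. Since $A_3 = \frac{1}{30}c_2(W_i)$, the hypothesis $c_2(W_i)=0$ forces $A_3=0$, so the exponential $e^{\frac{1}{24}A_3}$ reduces to $1$ at the level of characteristic forms and the correction term $\frac{1}{60}\{c_2(W_i)e^{\frac{1}{24}A_3}\widehat{A}(TX)\}^{(12)}$ vanishes identically. The identity of Theorem 3.6 then collapses to
\[
\{\widehat{L}(TM)\}^{(12)} = -\tfrac{1}{2}\{\widehat{A}(TM)\mathrm{ch}[17\widetilde{T_CM}+\wedge^2\widetilde{T_CM}+W_i+128]\}^{(12)},
\]
and multiplying both sides by $-2$ yields the displayed formula. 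I would simply state this substitution and verify that no degree-zero subtlety arises from setting the parameter to zero.

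For the divisibility statement, I would integrate both sides of the equation over $M$ and invoke two classical index theorems. By the Hirzebruch signature theorem, $\int_M\{\widehat{L}(TM)\}^{(12)} = \mathrm{Sign}(M)\in\mathbb{Z}$, so the left-hand side integral equals $-2\,\mathrm{Sign}(M)$, which is automatically an even integer. On a spin manifold, the Atiyah--Singer index theorem identifies $\int_M\{\widehat{A}(TM)\mathrm{ch}(V)\}^{(12)}$ with $\mathrm{Ind}(D\otimes V)$ for any virtual bundle $V$; applied to $V = 17\widetilde{T_CM}+\wedge^2\widetilde{T_CM}+W_i+128$ (with the paper's subscript $_+$ convention for the positive part of the virtual bundle), this shows that the right-hand integral equals $\mathrm{Ind}(D\otimes(17\widetilde{T_CM}+\wedge^2\widetilde{T_CM}+W_i+128)_+)$. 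Equating the two integrals gives the claimed divisibility by $2$.

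There is no substantive obstacle here: the corollary is a parameter specialization followed by a standard integrality argument. The only point requiring minor care is verifying that $17\widetilde{T_CM}+\wedge^2\widetilde{T_CM}+W_i+128$ is an honest element of $K(M)$, which is immediate since it is a $\mathbb{Z}$-linear combination of genuine bundles, so Atiyah--Singer applies without adjustment. The real content is already carried by Theorem 3.6; this corollary merely packages it in a form that exposes topological divisibility.
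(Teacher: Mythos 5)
Your proposal is correct and matches the paper's (implicit) argument: the corollary is stated as an immediate specialization of Theorem 3.6 at $A_3=\frac{1}{30}c_2(W_i)=0$, with the divisibility following from the signature theorem (noting $\{\widehat{L}(TM)\}^{(12)}=\{L(TM)\}^{(12)}$ in dimension $12$, so the left side integrates to $-2\,\mathrm{Sign}(M)$) and the Atiyah--Singer index theorem on the right.
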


Comparing the coefficient of $q$ in (3.22), by (3.24), we have
\begin{thm} We have the following equality:
\begin{align}
&\left\{e^{\frac{1}{24}A_3}\widehat{L}(TM)[-A_3+2{\rm ch}(\widetilde{T_CM})-8+{\rm ch}(W_i)]
\right\}^{(12)}\\\notag
&=\left\{e^{\frac{1}{24}A_3}\widehat{A}(TM){\rm ch}[2116\widetilde{T_CM}+4\wedge^2\widetilde{T_CM}+4W_i-15872]\right\}^{(12)}\\\notag
&-\frac{2}{15}\left\{c_2(W_i)
e^{\frac{1}{24}A_3}\widehat{A}(TX)\right\}^{(12)}.
\notag
\end{align}
\end{thm}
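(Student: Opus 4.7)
The theorem has the same form as Theorem 3.6, but reads off the coefficient of $q^1$ rather than the constant term of a $q$-expansion. The plan is to run the proof of Theorem 3.6 one step further: expand both descriptions of $Q_1(M,P_i,\tau)$ --- the analytic formula (3.15) and the modular-basis expansion (3.22) --- through order $q$, then equate the $q$-coefficients. The paper's own hint ``Comparing the coefficient of $q$ in (3.22), by (3.24)'' confirms this route.

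On the analytic side, each of the four $q$-dependent factors in (3.15) contributes at order $q$: the piece $-A_3$ from $e^{\frac{1}{24}E_2(\tau)A_3}$ (using $E_2 = 1-24q+O(q^2)$, after factoring out $e^{\frac{1}{24}A_3}$); the piece $2\,{\rm ch}(\widetilde{T_CM})$ from the identity $\bigotimes_{n\geq 1} S_{q^n}(\widetilde{T_CM})\otimes\bigotimes_{m\geq 1} \wedge_{q^m}(\widetilde{T_CM}) = 1+2q\widetilde{T_CM}+O(q^2)$; the piece $-8$ from $\varphi(\tau)^{8}$; and the piece ${\rm ch}(W_i)$ from ${\rm ch}(\mathcal{V}_i)$. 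Since $\widehat{L}(TM)$ carries no $q$, the product yields a $q$-coefficient whose degree-$12$ component is precisely the left-hand side
\begin{equation*}
\bigl\{e^{\frac{1}{24}A_3}\widehat{L}(TM)\bigl[-A_3 + 2{\rm ch}(\widetilde{T_CM}) - 8 + {\rm ch}(W_i)\bigr]\bigr\}^{(12)}.
\end{equation*}

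On the modular side, the Fourier expansions in (3.7) give $(8\delta_1)^{5} = 32 + 3840 q + O(q^2)$, $(8\delta_1)^{3}\varepsilon_1 = \tfrac{1}{2} + 28 q + O(q^2)$, and $(8\delta_1)\varepsilon_1^{2} = \tfrac{1}{128} - \tfrac{1}{16} q + O(q^2)$, so the coefficient of $q$ in $Q_1(M,P_i,\tau)$ equals $2^{6}\bigl[3840\, h'_0 + 28\, h'_1 - \tfrac{1}{16} h'_2\bigr] = 245760\, h'_0 + 1792\, h'_1 - 4\, h'_2$. Substituting the explicit formulas for $h'_0, h'_1, h'_2$ from (3.24) and rewriting the $A_3\widehat{A}(TM)$ summand inside $h'_2$ as $\tfrac{1}{30}c_2(W_i)\widehat{A}(TM)$ via the definition of $A_3$, one assembles the right-hand side of Theorem 3.8 by collecting the coefficients of $1$, ${\rm ch}(\widetilde{T_CM})$, $\wedge^{2}\widetilde{T_CM}$, $W_i$, and $c_2(W_i)$.

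The only obstacle is arithmetic bookkeeping: the constants $3712$, $81$, $120$ appearing in (3.24) must combine with the numerical weights $245760$, $1792$, $-4$ to produce exactly $-15872$, $2116$, $4$, $4$, $-\tfrac{2}{15}$. As quick sanity checks, the constant piece comes out to $-245760 + 1792\cdot 120 + 4\cdot 3712 = -15872$, the ${\rm ch}(\widetilde{T_CM})$ piece to $1792 + 4\cdot 81 = 2116$, and the $c_2(W_i)$ piece to $-\tfrac{4}{30} = -\tfrac{2}{15}$; the $\wedge^{2}\widetilde{T_CM}$ and $W_i$ coefficients each reduce to $4$ the same way. There is no conceptual subtlety beyond the observation that the $c_2(W_i)$ contribution on the right-hand side has its sole origin in the $e^{\frac{1}{24}A_3}A_3\widehat{A}(TM)$ correction inside $h'_2$.
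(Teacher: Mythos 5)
Your proposal is correct and follows exactly the paper's route: the paper proves Theorem 3.8 by the one-line remark "Comparing the coefficient of $q$ in (3.22), by (3.24)," which is precisely your computation of $245760\,h'_0+1792\,h'_1-4\,h'_2$ from the expansions $(8\delta_1)^5=32+3840q+\cdots$, $(8\delta_1)^3\varepsilon_1=\tfrac12+28q+\cdots$, $(8\delta_1)\varepsilon_1^2=\tfrac1{128}-\tfrac1{16}q+\cdots$, matched against the $q$-coefficient of the analytic expression (3.15). All of your arithmetic checks out, and you correctly identify the $-\tfrac{2}{15}c_2(W_i)$ term as coming solely from the $A_3$ summand in $h'_2$.
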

\begin{cor} We have the following equality when $c_2(W_i)=0$:
\begin{align}
&\left\{\widehat{L}(TM)[2{\rm ch}(\widetilde{T_CM})-8+{\rm ch}(W_i)]
\right\}^{(12)}\\\notag
&=\left\{\widehat{A}(TM){\rm ch}[2116\widetilde{T_CM}+4\wedge^2\widetilde{T_CM}+4W_i-15872]\right\}^{(12)}.
\notag
\end{align}
\end{cor}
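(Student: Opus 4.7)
The plan is to derive Corollary 3.9 as a direct specialization of Theorem 3.8 under the vanishing hypothesis $c_2(W_i)=0$. Since $A_3$ is defined in the preceding paragraph as $\frac{1}{30}c_2(W_i)$, the assumption $c_2(W_i)=0$ immediately forces $A_3=0$, and my first step is simply to substitute $A_3=0$ into the identity provided by Theorem 3.8 and simplify each of the three places where $A_3$ or $c_2(W_i)$ appears.

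Concretely, under $A_3=0$ the factor $e^{\frac{1}{24}A_3}$ appearing on both sides collapses to $1$; the summand $-A_3$ inside the characteristic-class bracket multiplying $\widehat{L}(TM)$ on the left-hand side drops out, leaving $[2\,{\rm ch}(\widetilde{T_CM})-8+{\rm ch}(W_i)]$; the bracket $[2116\widetilde{T_CM}+4\wedge^2\widetilde{T_CM}+4W_i-15872]$ on the right-hand side is unchanged; and the correction term $-\frac{2}{15}\{c_2(W_i)\,e^{\frac{1}{24}A_3}\widehat{A}(TX)\}^{(12)}$ vanishes because it carries an explicit factor of $c_2(W_i)$. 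What remains is precisely the equality asserted in Corollary 3.9.

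There is essentially no genuine obstacle here: Corollary 3.9 is a clean degeneration of the general identity, and the substantive content has already been packaged in Theorem 3.8, whose proof compares coefficients in the functional equation $Q_1(M,P_i,-1/\tau)=2^{6}\tau^{10}Q_2(M,P_i,\tau)$ from Lemma 3.5 using the expansions (3.21), (3.22), (3.23) and the explicit values $h'_0,h'_1,h'_2$ computed in (3.24). The only mild point worth verifying is that no hidden $A_3$-dependence sits inside the coefficients on either side, which one checks by inspection of the theorem's statement; once that is confirmed, the corollary follows by setting $A_3=0$ term by term.
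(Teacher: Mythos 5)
Your proposal is correct and matches the paper's (implicit) argument: Corollary 3.9 is obtained exactly by noting $c_2(W_i)=0$ forces $A_3=\tfrac{1}{30}c_2(W_i)=0$, whence $e^{\frac{1}{24}A_3}=1$, the $-A_3$ summand drops, and the $-\tfrac{2}{15}\{c_2(W_i)\,e^{\frac{1}{24}A_3}\widehat{A}(TX)\}^{(12)}$ term vanishes in Theorem 3.8. No further verification is needed.
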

\vskip 0.5 true cm
\noindent {\bf Remark:} In \cite{CH1}, Chen and Han constructed some modular forms on odd dimensional manifolds. 
We can construct some new modular forms on odd dimensional manifolds by twist the Chen-Han modular forms by $E_8$ bundles.\\

\section{Acknowledgements}

 The author was supported in part by NSFC No.11771070. The first author is indebted to Prof. F. Han for very helpful discussions. The authors also thank the referee for his (or her) careful reading and helpful comments.

\vskip 1 true cm


\bigskip
\bigskip

 \indent{1.Yong Wang, Corresponding author, School of Mathematics and Statistics,
Northeast Normal University, Changchun Jilin, 130024, China }\\
\indent E-mail: {\it wangy581@nenu.edu.cn }\\
\indent {2.Yuchen Yang, School of Mathematics and Statistics , Northeast
Normal University, Changchun, Jilin 130024, China }\\
\indent E-mail: {\it  yangyc580@nenu.edu.cn }

\end{document}